\DeclareMathOperator{\rot}{rot}
\DeclareMathOperator{\lex}{lex}
\newcommand{\sigmaast}{\sigma^\ast\!}
\newcommand{\delete}[1]{{\downarrow_{#1}}}
\newcommand{\rotation}[1]{{#1}_{\rot}}
\newcommand{\rotationback}[1]{{#1}_{\rot(-1)}}
\def\calH{{\cal{H}}}
\def\calU{{\cal{U}}}
\def\calD{{\cal{D}}}
\def\calX{{\cal{X}}}
\def\AA{{\cal{A}}}
\def\SS{{\cal{S}}}
\title{An extension theorem for signotopes
} 
\author{Helena Bergold}
{Department of Theoretical Computer Science, Freie Universit\"at Berlin, Germany}
{helena.bergold@fu-berlin.de}
{https://orcid.org/0000-0002-9622-8936}
{DFG Research Training Group 'Facets of Complexity' (DFG-GRK~2434)}
\author{Stefan Felsner}
{Institut f\"ur Mathematik, Technische Universit\"at Berlin, Germany}
{felsner@math-tu-berlin.de}
{https://orcid.org/0000-0002-6150-1998}
{DFG Grant FE~340/13-1}
\author{Manfred Scheucher}
{Institut f\"ur Mathematik, Technische Universit\"at Berlin, Germany}
{scheucher@math-tu-berlin.de}
{https://orcid.org/0000-0002-1657-9796}
{DFG Grant SCHE~2214/1-1}
\authorrunning{H. Bergold, S. Felsner, and M. Scheucher} 
\keywords{
    arrangement of pseudolines,
    extendability,
    Levi's extension lemma,
    arrangement of pseudohyperplanes,
    signotope,
    oriented matroid,
    partial order,
    Boolean satisfiability (SAT)
} 
\newcommand{\manfred}[1]{\textcolor{blue}{Manfred: #1}}
\newcommand{\helena}[1]{\textcolor{orange}{Helena: #1}}
\numberwithin{theorem}{section} 
\numberwithin{lemma}{section}
\numberwithin{proposition}{section}
\numberwithin{corollary}{section}
\numberwithin{conjecture}{section}
\newtheorem*{definition*}{Definition}
\renewcommand{\paragraph}[1]{\subsubsection*{#1}}
\begin{document}
	
	\maketitle
	
	\begin{abstract}
		In 1926, Levi showed that, for every pseudoline arrangement~$\mathcal{A}$ and two points in the plane, $\mathcal{A}$ can be extended by a pseudoline which contains the two prescribed points. 
	Later extendability was studied for arrangements of pseudohyperplanes in higher dimensions. 
	While the extendability of an arrangement of proper hyperplanes in $\mathbb{R}^d$ with a hyperplane containing  $d$ prescribed points is trivial,
    Richter-Gebert 
	found an arrangement of pseudoplanes in $\mathbb{R}^3$ which cannot be extended with a pseudoplane containing two particular prescribed points. 
		
	In this article, we investigate the extendability of signotopes, 
	which are a combinatorial structure encoding a  rich subclass of pseudohyperplane arrangements. 
    Our main result is that signotopes of odd rank are extendable in the sense that for two prescribed crossing points we can add an element containing them.
	Moreover, we conjecture that in all even ranks $r \geq 4$ there exist signotopes which are not extendable for two prescribed points. 
	Our conjecture is supported by examples in ranks $4$,~$6$,~$8$,~$10$, and $12$
	that were found with a SAT based approach.

\end{abstract}

\section{Introduction}

Given a family of hyperplanes $\mathcal{H}$ in $\mathbb{R}^d$, 
any $d$ points in $\mathbb{R}^d$,
not all on a common hyperplane of $\mathcal{H}$,
define a hyperplane which is distinct from the hyperplanes in~$\mathcal{H}$.
For dimension $d=2$, 
Levi \cite{Levi1926} proved in his pioneering article on pseudoline arrangements 
that the fundamental extendability of line arrangements also applies to
the more general setting of pseudoline arrangements.
A \emph{pseudoline} is a Jordan
curve in the Euclidean plane 
such that its removal from the plane results in two unbounded components,
and 
a \emph{pseudoline arrangement} is a family of pseudolines such that each pair of pseudolines intersects in exactly one point, where the two curves cross properly.

\begin{theorem}[Levi's extension lemma for pseudoline arrangements \cite{Levi1926}]
\label{theorem:levi}
Given an arrangement $\mathcal{A}$ of pseudolines and two points in $\mathbb{R}^2$, not on a common pseudoline of~$\mathcal{A}$. 
Then~$\mathcal{A}$ can be extended by an additional pseudoline which passes through the two prescribed points. 
\end{theorem}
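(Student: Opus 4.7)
The plan is to construct a Jordan arc $\gamma$ from $p$ to $q$ that meets each pseudoline of $\mathcal{A}$ in at most one transversal point, and then to extend $\gamma$ at both ends to a bi-infinite Jordan curve meeting every member of $\mathcal{A}$ in exactly one point. The resulting curve will be a new pseudoline through $p$ and $q$ that extends $\mathcal{A}$.

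First I would pick any Jordan arc $\gamma_0$ from $p$ to $q$ in general position with respect to $\mathcal{A}$ (transversal crossings, avoiding all vertices of $\mathcal{A}$) and associate to every such arc $\gamma$ the complexity $c(\gamma) := \sum_{\ell \in \mathcal{A}} |\gamma \cap \ell|$. If every pseudoline already meets $\gamma_0$ at most once, skip to the extension phase. Otherwise some $\ell \in \mathcal{A}$ crosses $\gamma_0$ in two consecutive points $a,b$, and the sub-arc $\alpha \subset \gamma_0$ together with the sub-segment $\sigma \subset \ell$ between $a$ and $b$ bound a topological disk $B$, i.e.\ a bigon.

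The central claim is a bigon-elimination step. Among all bigons formed by the current arc $\gamma$ and any pseudoline of $\mathcal{A}$, I would choose one that is \emph{innermost}, in the sense that its open interior contains no other such bigon, and then reroute $\gamma$ by replacing $\alpha$ with an arc $\alpha'$ obtained by pushing $\alpha$ across $\ell$ just to the other side of $\sigma$. This deletes the two crossings of $\gamma$ with $\ell$ at $a,b$. The pseudoline axiom ``two distinct pseudolines cross in exactly one point'' combined with innermostness of $B$ rules out any compensating increase with another pseudoline $\ell' \neq \ell$: if $\ell'$ entered $B$ through $\sigma$, it would be forced to exit through $\alpha$ (it cannot re-cross $\ell$), producing exactly one point of $\gamma \cap \ell'$ on $\alpha$, and the reroute simply transfers that crossing to $\alpha'$ near where $\ell'$ crosses $\sigma$; the symmetric case is analogous, and the case in which $\ell'$ crosses $\alpha$ twice inside $B$ is excluded by innermostness. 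Hence $c(\gamma)$ strictly decreases by $2$. Iterating, the procedure terminates with an arc meeting each pseudoline at most once.

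Finally, I would extend $\gamma$ beyond $p$ and $q$ through the unbounded cells of $\mathcal{A}$ into two rays so that together with $\gamma$ they form a Jordan curve meeting every pseudoline exactly once; the as-yet uncrossed pseudolines are collected one at a time by routing the two tails through the appropriate unbounded regions in the correct cyclic order. The main obstacle I expect is to make the innermost-bigon choice and the rerouting operation rigorous in the purely topological setting of pseudolines---in particular, handling every case of how another pseudoline can thread through $B$ and verifying that no new unwanted bigons are created by the local move. This careful case analysis, relying decisively on the single-crossing axiom, is precisely the heart of Levi's original argument.
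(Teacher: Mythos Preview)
Your bigon-elimination argument is correct in outline and is essentially Levi's original topological proof; it takes a genuinely different route from the paper. Here Theorem~\ref{theorem:levi} is obtained as a corollary of Theorem~\ref{theorem:levi_odd} for $r=3$: one perturbs to a simple arrangement with the prescribed points as crossings, encodes it as a rank-$3$ signotope, and then proves $2$-extendability by a purely combinatorial mechanism---rotate the signotope until the two prescribed $2$-subsets become incomparable in the associated partial order (Proposition~\ref{lem:rotate}), extend by a new last element via the down-set construction of Proposition~\ref{prop:down-set}, and rotate back (Lemma~\ref{lem:rotateback}); finally undo the perturbation (Section~\ref{sec:no_restriction}). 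Your approach is more direct and self-contained for pseudoline arrangements and needs no signotope machinery; the paper's combinatorial argument, by contrast, is precisely what transfers verbatim to all odd ranks~$r$, which is the purpose of the article. One place where your sketch is thinner than the bigon step is the final extension phase: routing the two tails of $\gamma$ to infinity so that every remaining pseudoline is crossed exactly once while no already-crossed pseudoline is recrossed is not automatic in the Euclidean picture (for instance, an endpoint may sit in a bounded cell whose boundary consists entirely of already-crossed pseudolines), and is usually handled by passing to the projective model or by a separate inductive argument; you should expect to spend as much care there as on the innermost-bigon case analysis.
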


Several proofs for Levi's extension lemma are known today (besides \cite{Levi1926}, see also \cite{ArroyoMRS18,FelsnerWeil2001,Schaefer2019})
and generalizations to higher dimensions have been studied in the context of oriented matroids, which by the representation theorem of Folkman and Lawrence \cite{FolkmanLawrence78} have representations as projective pseudohyperplane arrangements.
For more about oriented matroids, see~\cite{BjoenerLVWSZ1993}.

Goodman and Pollack \cite{GoodmanPollack81} presented an arrangement of 8 pseudoplanes in~$\mathbb{R}^3$ and a selection of three points such that there is no extension of the arrangement with a pseudoplane containing the points.
Richter-Gebert \cite{RichterGebert1993} then investigated a weaker version with only two prescribed points in dimension 3
such that the extending pseudohyperplane contains these two points. He found an example of a rank $4$ oriented matroid on~$8$ elements such that there is no one element extension with an element containing the two prescribed cocircuits. 
With the representation theorem this implies that even the weaker extendability with two prescribed points does not hold. 
The existence of an extension theorem or of counterexamples in higher dimensions/ranks remains open.

\medskip

We present a proof of Levi's extension lemma in a purely combinatorial setting and show that the proof can be adapted to work for higher dimensions. We represent the geometry by 
$r$-signotopes and prove extendability in even dimensions~$d$, that is, when the rank $r=d+1$ is odd; see Theorem~\ref{theorem:levi_odd}.
Surprisingly,
there are non-extendable examples in rank $4$, $6$, $8$, $10$, and $12$.
We conjecture that there 
is no extension theorem 
for any even rank~$r \ge 4$; 
see Conjecture~\ref{conj:levi_even}. 

Signotopes are in close relation to higher Bruhat orders which were introduced by Manin and Schechtman~\cite{manin1989arrangements} and further studied in \cite{Ziegler1993}.
In rank~3, signotopes correspond to pseudoline arrangements in the plane~\cite{FelsnerWeil2001}. In higher ranks they are a subclass of pseudohyperplane arrangements.

Before we formulate our extension theorem for $r$-signotopes,
we introduce some notation and discuss the relation between pseudoline arrangements and 3-signotopes 
(in Section~\ref{ssec:signotopes}). This leads to a reformulation of Levi's extension lemma 
which will be investigated in the context of signotopes of odd rank in Section~\ref{sec:reformulation}.

\subsection{Signotopes}
\label{ssec:signotopes}

Signotopes are a combinatorial structure generalizing permutations and \emph{simple} pseudoline arrangements (i.e., no three pseudolines cross in a common point).
For $r \geq 1$ a \emph{signotope} of \emph{rank} $r$ (short: $r$-signotope) on $n$ elements is a mapping $\sigma$ from $r$-element subsets (\emph{$r$-subsets}) 
of $[n]$ to $+$ or $-$, i.e., 
$ \sigma : \binom{[n]}{r} \to \{+,-\}$
such that 
for every $(r+1)$-subset $X = \{x_1, \ldots, x_{r+1}\}$ (\emph{$r$-packet}) of $[n]$ with $x_1 < x_2 < \ldots< x_{r+1}$ 
there is at most one sign change in the  sequence  
\[
\sigma(X \backslash \{x_{1}\}), \sigma(X \backslash \{x_{2}\}), \ldots, \sigma(X \backslash \{x_{r+1}\}).
\] 

Note that this sequence lists the signs of all induced $r$-subsets of $X$ in reverse lexicographic order.
For $3$-signotopes, the following $8$ sign patterns on $4$-subsets are allowed:
\begin{align*}
++++, \ +++-, \ ++--, \ +---, \
----, \ ---+, \ --++, \ -+++.
\end{align*}

For sake of readability, we write $X = (x_1, \ldots, x_{t})$
to denote a $t$-subset of $[n]$ with sorted elements $x_1 < x_2 < \ldots< x_{t}$. For such an $X$ we denote by $X_j = (x_1,\ldots,x_{j-1},x_{j+1},\ldots,x_{t})$ the set without $x_j$. 
With the convention $- < +$, the condition about sign changes in $r$-signotopes
can be written as a monotonicity condition for $r$-packets $X = (x_1, \ldots, x_{r+1})$:
\[
\sigma(X_1)  \leq \sigma(X_2) \leq \ldots\leq 
\sigma(X_{r+1}) \quad\text{or}\quad \sigma(X_1)  \geq \sigma(X_2) \geq \ldots \geq \sigma(X_{r+1}).
\]

It is well-known 
that every arrangement of pseudolines is isomorphic 
to an arrangement of $x$-monotone pseudolines \cite{Goodman80}.
In such a representation, we label the pseudolines from top to bottom on the left by $1,\ldots,n$. 
Since two pseudolines cross exactly once, the pseudolines appear in reversed order on the right. 
Now the 
corresponding $3$-signotope $\sigma$ is obtained as follows: The sign of $\sigma(a,b,c)$ for $a<b<c$ indicates the orientation of the triangle formed by the pseudolines $a,b,c$ (see Figure~\ref{fig:orientationtriangle}). 
If the crossing of $a$ and $c$ is below $b$, it is $\sigma(a,b,c) = +$ and 
if  the crossing of $a$ and $c$ is above $b$, it is $\sigma(a,b,c) = - $.
In the following we identify the crossings with the elements which cross, i.e. for 3-signotopes crossings are subsets of size~2. 
The 3-signotope $\sigma$ gives information about the partial order of the crossings from left to right. 
If $\sigma(a,b,c) = +$ it holds $ab \prec ac \prec bc$ and if $\sigma(a,b,c) = -$ it is $bc\prec ac\prec ab$.

\begin{figure}[htb]
	\begin{subfigure}[t]{.48\textwidth}
		\centering
		\includegraphics[page = 1]{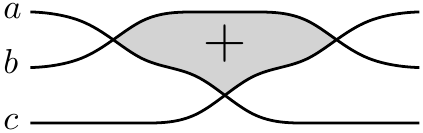} 
		\label{subfig:triangle+}
	\end{subfigure} 
	\begin{subfigure}[t]{.48\textwidth}
		\centering
		\includegraphics[page = 2]{trianglesn3}
		\label{subfig:triangle-}
	\end{subfigure}
	\caption{Connection between pseudoline arrangements and $3$-signotopes.}
	\label{fig:orientationtriangle} 
\end{figure}

Felsner and Weil \cite{FelsnerWeil2001} showed that rank 3 signotopes are in bijection with simple pseudoline arrangements in $\mathbb{R}^2$ with a fixed top cell.
For $r \ge 4$, $r$-signotopes correspond to special pseudohyperplane arrangements in~$\mathbb{R}^{r-1}$, i.e., they are a subclass of oriented matroids of rank~$r$.
A geometric representation of $r$-signotopes in the plane 
is presented in~\cite{Miyata2021} 
(see also~\cite{BalkoFulekKyncl2015} for the rank $3$ case).

\subsection{An extension theorem for signotopes}
\label{sec:reformulation}

In Levi's extension lemma for pseudoline arrangements, 
each of the two prescribed points can either lie
in a cell of the arrangement,
on one pseudoline, 
or be the crossing point of two pseudolines.
To formulate an extension lemma 
in terms of 3-signotopes
we restrict our considerations to simple pseudoline arrangements and to crossing points as prescribed points.
Since the extending pseudoline passes through the two prescribed crossing points, the extension yields a non-simple arrangement. However, by perturbing the extending pseudoline at the non-simple crossing points, we arrive at a simple arrangement, see Figure~\ref{fig:perturbation}.

\begin{figure}[htb]
	\centering
	\includegraphics{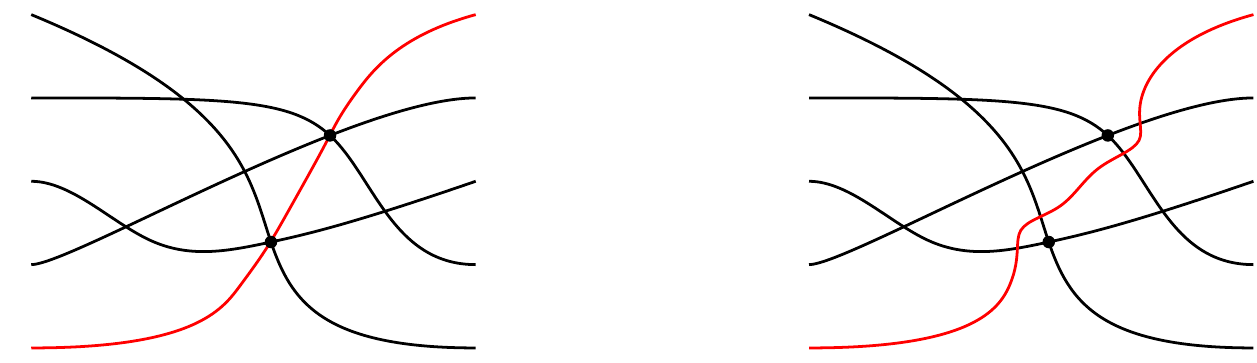}
	\caption{Perturbing an extending pseudoline at the two non-simple crossing points.}
	\label{fig:perturbation} 
\end{figure}

A perturbation 
at a prescribed crossing together with the new inserted pseudoline
yields a \emph{triangular cell} 
incident to the crossing. 
This cell is bounded by
the two pseudolines defining the crossing and the extending pseudoline. 
Triangular cells play an important role in the study of pseudoline arrangements, since it is possible to change the orientation of a triangle by moving one of the pseudolines 
over the crossing of the two others. Such a local perturbation is called a \emph{triangle flip}, it does not change the orientation of any other triangle in the arrangement. 
The triangular cells of the arrangement represented by a $3$-signotope~$\sigma$ are in one to one correspondence with  $3$-subsets such that if we change the sign of this 3-subset in $\sigma$ we obtain a new signotope $\sigma'$.
We call such a $3$-subset a \emph{fliple}. 
The notion of fliples generalizes to higher ranks. In an $r$-signotope $\sigma$ on $[n]$, an $r$-subset $X \subseteq [n]$ is a \emph{fliple} if both assignments $+$ and $-$ to $\sigma(X)$ result in a signotope.
It is worth noting that fliples in signotopes are the analogon of mutations in oriented matroids.
While every signotope contains at least two fliples \cite{FelsnerWeil2001},
it remains a central open problem in combinatorial geometry to decide whether every uniform oriented matroid contains a mutation \cite[Chapter 7.3]{BjoenerLVWSZ1993}.

Let $\AA$ be an arrangement of pseudolines, which are 
labeled $1,\ldots,n$ from top to bottom on the left.
When applying Levi's extension lemma to 
extend $\AA$ the left endpoint of the extending line $\ell$ will be between two consecutive endpoints of pseudolines of $\AA$. To re-establish the properties of the labeling, we have to set the label of $\ell$ accordingly and increase the label of every pseudoline
that starts below $\ell$ by one.
To cope with this relabeling-issue in terms of signotopes,
we introduce the following notion.
For $k \in [n]$ and a subset $X$ of $[n]$,
we define 
\[
X\delete{k} 
= 
\{ x \mid x \in X, x < k\} 
\cup 
\{ x-1 \mid x \in X, x > k\}.
\]
Note that the cardinality of $X$ and $X\delete{k}$ is the same if and only if $k \notin X$.
For an $r$-signotope $\sigma$ on~$[n]$,
we define the \emph{deletion} of an element $k \in [n]$ as  $\sigma\delete{k}$ by 
\[\sigma\delete{k}(X\delete{k}) \mathop{:=} \sigma(X) \]
for all $r$-sets $X \subseteq [n]$ with $k \notin X$. 
This is an $r$-signotope on~$[n-1]$ because each $r$-packet has been an $r$-packet for $\sigma$.

\begin{definition*}
An $r$-signotope $\sigma$ on $[n]$  is \emph{$t$-extendable} if for all pairwise disjoint $(r-1)$-subsets $I_1, \ldots, I_t \in \binom{[n]}{r-1}$, there exists $k \in [n+1]$ and an $r$-signotope $\sigmaast$  on $[n+1]$ 
with fliples $I_1^\ast,\ldots, I_t^\ast$ 
such that $\sigmaast\delete{k} = \sigma$, 
and
$I_j^\ast\delete{k} = I_j$ for all $j = 1, \ldots, t$.
Hence the element $k$ \emph{extends} $\sigma$ to~$\sigmaast$.
\end{definition*}

Note that a $t$-extendable $r$-signotope on $n \ge (r-1)t $ elements is clearly $(t-1)$-extendable.
While the 1-extendability is a simple exercise\footnote{For the sake of completeness, we give a proof of 1-extendability in Corollary~\ref{cor:1extendability} which uses more evolved techniques.}, 
the first interesting part is the $2$-extendability, which we investigate in this paper. 

\begin{theorem}[An extension theorem for signotopes of odd rank]
\label{theorem:levi_odd}
	For every odd rank $r \ge 3$, 
	every $r$-signotope is 2-extendable.
\end{theorem}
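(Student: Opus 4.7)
The plan is to prove Theorem~\ref{theorem:levi_odd} by induction on $n$, reducing to a minimal core configuration and then leveraging a parity argument that distinguishes odd from even rank.

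First I would reformulate extensions in terms of a \emph{trace}. Any one-element extension $\sigmaast$ of $\sigma$ by an element $k$ is completely determined by the function $f : \binom{[n]}{r-1} \to \{+,-\}$ with $f(I) = \sigmaast(I \cup \{k\})$, since the signs of $r$-subsets not containing $k$ are fixed by $\sigmaast\delete{k} = \sigma$. The signotope axiom applied to an $(r+1)$-packet $X \ni k$ with $Y = X \setminus \{k\}$ becomes the following monotonicity condition on $f$: inserting $\sigma(Y)$ into the sequence $f(Y_1), \ldots, f(Y_r)$ at the position occupied by $k$ must yield a monotone sequence. The fliple condition $I_j \cup \{k\}$ translates into the requirement that flipping $f(I_j)$ preserves this monotonicity for every $(r+1)$-packet through $I_j \cup \{k\}$, which amounts to local agreement of neighboring trace values around $I_j$.

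Second I would handle the base case $n = 2(r-1)$, where $I_1$ and $I_2$ partition $[n]$. Here the trace $f$ has only $\binom{2(r-1)}{r-1}$ values to assign, and I would construct $f$ explicitly by a formula depending on the parities $|I \cap I_1| \bmod 2$ and the relative position of $I$ with respect to $I_1, I_2$. Verification of the signotope axioms and the two fliple conditions reduces to a finite symbolic check, and the odd-rank parity makes the natural candidate work. For the inductive step, I would pick an element $j \in [n] \setminus (I_1 \cup I_2)$ (which exists since $n > 2(r-1)$), invoke the inductive hypothesis on $\sigma\delete{j}$ to obtain an extension $\tau^\ast$ with fliples $I_1\delete{j}$ and $I_2\delete{j}$, and then reinsert $j$ into $\tau^\ast$ to produce the desired $\sigmaast$. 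Most signs of $\sigmaast$ are determined by $\sigma$ and $\tau^\ast$; the free signs are those of the form $\sigmaast(\{j\} \cup S \cup \{k\})$ for $(r-2)$-subsets $S \subseteq [n] \setminus \{j\}$, and they must be chosen to satisfy the monotonicity axiom on each $(r+1)$-packet containing both $j$ and $k$.

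The main obstacle is this reinsertion step, and it is where the parity of $r$ enters essentially. Viewing the constraints on the free signs as a cycle along the $(r-2)$-subsets $S$, the monotonicity requirements accumulate to a global sign-change count whose parity is governed by $r$. For odd $r$ this accumulated parity is even, so the constraints close up consistently and a valid reinsertion exists; for even $r$ the parity is odd, which obstructs reinsertion in general and is consistent with the conjectured even-rank counterexamples. Making this cycle argument precise—formulating it as a two-coloring problem on an auxiliary graph whose cycles have length controlled by $r$—will be the technical heart of the proof. Once established, combining it with the explicit base case closes the induction and yields $2$-extendability for all odd $r \geq 3$.
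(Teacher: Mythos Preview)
Your outline takes a route completely different from the paper's: the paper never uses induction on $n$ or any reinsertion argument. Instead it defines a partial order $\prec$ on $(r-1)$-subsets, shows that whenever $I$ and $J$ are \emph{incomparable} in $\prec$ one can extend by a last element $n+1$ so that $I\cup\{n+1\}$ and $J\cup\{n+1\}$ are fliples (Proposition~\ref{prop:down-set}), and then uses the \emph{rotation} operation on signotopes to force incomparability. The parity of $r$ enters in one line: after $n$ rotations every sign has been flipped exactly $r$ times, so for odd $r$ the partial order is reversed; since a single rotation cannot swap the relation $I\prec J$ to $J\prec I$ when $I,J$ are disjoint (Proposition~\ref{prop:rot_incomp}), the pair must become incomparable at some intermediate stage.

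Your proposal, by contrast, has real gaps that you would need to close before it becomes a proof. First, the base case is not a finite check: for rank $r$ your base has $n=2(r-1)$ and $\binom{2(r-1)}{r-1}$ trace values, and you owe a uniform construction valid for all odd $r$, not a symbolic verification for each fixed $r$. Second, and more seriously, the reinsertion step is only asserted. Knowing that $\sigma\delete{j}$ extends by some element $k$ does not tell you \emph{where} $k$ lands, and the constraints on the undetermined signs $\sigmaast(\{j,k\}\cup S)$ come from every $(r+1)$-packet through $\{j,k\}$: these form a hypergraph, not a single cycle, so a parity/two-coloring argument is not enough to guarantee consistency. Finally, after reinserting $j$ you create new packets through $I_1\cup\{k\}$ and $I_2\cup\{k\}$ (those containing $j$), and you give no argument that the fliple property survives on these new packets. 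Until these points are made precise, the odd/even distinction in your sketch is heuristic rather than a proof.
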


Surprisingly, our proof of Theorem~\ref{theorem:levi_odd} 
(see Section~\ref{sec:extension_theorem_odd})
generalizes to the more general setting, 
where the $(r-1)$-subsets $I$ and $J$, 
which are fliples in the extension, intersect.

\begin{corollary}
\label{corollary:nonemptyintersection}
    Let $\sigma$ be an $r$-signotope on $[n]$, $I$ and $J$ be two $(r-1)$-subsets of $[n]$ such that  $|I \cap J| +r$ is odd.
    Then $\sigma$ is extendable to an $r$-signotope $\sigmaast$ on $[n+1]$
    with fliples $I^\ast,J^\ast$ and an \emph{extending} element $k \in [n+1]$
    such that
    $\sigmaast\delete{k} = \sigma$, and
    $I^\ast\delete{k} = I$,
    and
    $J^\ast\delete{k} = J$.
\end{corollary}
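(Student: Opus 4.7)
The plan is to re-run the proof of Theorem~\ref{theorem:levi_odd} and track the single combinatorial hypothesis on which it rests. Observe that Corollary~\ref{corollary:nonemptyintersection} specializes to Theorem~\ref{theorem:levi_odd} when $I \cap J = \emptyset$, since then $|I\cap J|+r = r$ is odd. My expectation is that the proof of Theorem~\ref{theorem:levi_odd} isolates exactly one arithmetic condition of the form ``a certain integer is odd'', and that the same condition, read in the more general setting, is precisely $|I\cap J|+r$ being odd. The goal of the proof of the corollary is therefore to re-examine that proof, identify this parity condition, and verify that it is the only place where ``$r$ odd, $I \cap J = \emptyset$'' was used.

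Concretely, I would build the extension $\sigmaast$ on $[n+1]$ by inserting a new element $k$ and specifying $\sigmaast(Y\cup\{k\})$ for every $(r-1)$-subset $Y \subseteq [n+1]\setminus\{k\}$; the remaining values of $\sigmaast$ are then forced by $\sigmaast\delete{k}=\sigma$. First one fixes the position of $k \in [n+1]$ consistently with the intended fliples $I^\ast$ and $J^\ast$. On those $(r-1)$-subsets that lie ``outside'' the pair $\{I,J\}$ in a suitable linear order on $\binom{[n]}{r-1}$, the signs $\sigmaast(Y\cup\{k\})$ are already forced by the monotonicity condition on the $(r+1)$-packets in which they participate. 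On a monotone sweep-path of $(r-1)$-subsets connecting $I$ and $J$, the signs are chosen so that every step of the sweep introduces at most one sign change per $(r+1)$-packet, while the signs at $I^\ast$ and $J^\ast$ themselves are left free to be flipped.

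The main obstacle is to verify signotope monotonicity on the $(r+1)$-packets that contain $k$ together with elements of both $I$ and $J$, since those packets see the full sweep and must tolerate flippability at both endpoints simultaneously. A counting argument along the sweep shows that the parity of the number of sign changes between the value at $I^\ast$ and the value at $J^\ast$ is governed by the distance $|I\triangle J| = 2(r-1) - 2|I\cap J|$ in the partial order on $(r-1)$-subsets, while the requirement that $I^\ast$ and $J^\ast$ be simultaneously fliples contributes an additional $r$ to the parity budget, with one unit per coordinate of the rank. Balancing these contributions, the construction closes up consistently on every packet if and only if $|I\cap J|+r$ is odd, which is exactly the hypothesis of the corollary. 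After this core check, the identities $\sigmaast\delete{k}=\sigma$, $I^\ast\delete{k}=I$, and $J^\ast\delete{k}=J$, as well as the fliple property of $I^\ast$ and $J^\ast$, follow immediately from the construction, so the same machinery that proves Theorem~\ref{theorem:levi_odd} yields Corollary~\ref{corollary:nonemptyintersection} with ``$r$ odd'' replaced throughout by ``$|I\cap J|+r$ odd''.
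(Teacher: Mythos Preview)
Your opening instinct is right: the corollary follows by re-running the argument for Theorem~\ref{theorem:levi_odd} and isolating the parity condition. But your description of that argument is not what the paper actually does, and the mechanism you sketch does not work.

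The proof of Theorem~\ref{theorem:levi_odd} does \emph{not} build $\sigmaast$ by a direct ``sweep-path'' construction with sign-change counting. It proceeds via rotations: one first rotates $\sigma$ until the (rotated) sets $I$ and $J$ become incomparable in the associated partial order (Proposition~\ref{lem:rotate}), then applies the down-set extension (Proposition~\ref{prop:down-set}) to insert a new last element, and finally rotates back (Lemma~\ref{lem:rotateback}). The only place the hypotheses ``$r$ odd'' and ``$I\cap J=\emptyset$'' enter is in the rotation step, and the parity mechanism there is the following. After $n$ clockwise rotations every sign of $\sigma$ has flipped exactly $r$ times, so the partial order is reversed iff $r$ is odd. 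Along the way, by Proposition~\ref{prop:rot_incomp}, a single rotation can reverse the comparison between $I$ and $J$ only when the element being rotated lies in $I\cap J$; this happens exactly $s=|I\cap J|$ times. If $I$ and $J$ were never incomparable, the comparison would flip exactly $s$ times over the $n$ rotations, yet the net effect must be a reversal iff $r$ is odd. These are compatible only when $s$ and $r$ have the same parity, so when $s+r$ is odd we are forced to hit an incomparable moment. That is precisely the hypothesis of the corollary, and the rest of the proof is identical to Theorem~\ref{theorem:levi_odd}.

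Your proposed bookkeeping via $|I\triangle J|=2(r-1)-2|I\cap J|$ cannot be the source of the parity constraint, since $|I\triangle J|$ is always even; and the phrase ``contributes an additional $r$ to the parity budget'' has no content without saying what is being counted. More importantly, a direct extension by a fixed $k$ (without rotating first) cannot in general make both $I\cup\{k\}$ and $J\cup\{k\}$ fliples when $I$ and $J$ are comparable, which is exactly the obstruction the rotation step is there to remove. So the gap is that you have misidentified where the parity lives: it lives in the rotation argument (Proposition~\ref{lem:rotate} and its refinement via Proposition~\ref{prop:rot_incomp}), not in a packet-by-packet sign-change count along a sweep.
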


Despite the restrictions to simple arrangements and crossing points as prescribed points we can derive Levi's extension lemma (Theorem~\ref{theorem:levi}) in its full generality with little extra work from Theorem~\ref{theorem:levi_odd}.
Details are deferred to \cref{sec:no_restriction}.

The statement of Theorem~\ref{theorem:levi_odd}
applies only to signotopes of odd rank. This is not just a defect of our proof because signotopes in even rank indeed behave differently. For ranks $r = 4,6,8,10,12$ 
we found signotopes on $n = 2r$  elements,
which are not 2-extendable. 
The examples and the source code to verify their correctness are available as supplemental data~\cite{supplemental_data}; see \cref{sec:computerassisted} for details.
Based on these examples, we dare to conjecture:

\begin{conjecture}[No extension theorem for signotopes of even rank]
	\label{conj:levi_even}
	For every even rank $r \ge 4$,
	there~is an $r$-signotope
	which is not $2$-extendable.
\end{conjecture}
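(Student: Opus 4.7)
The plan is to establish the conjecture by exhibiting an explicit infinite family of counterexamples, one for each even $r \geq 4$. Since the authors already verify the cases $r \in \{4,6,8,10,12\}$ by SAT, I would aim for a \emph{rank-two suspension} construction that lifts a counterexample in rank~$r$ to a counterexample in rank~$r+2$ on two additional elements. Starting from an $r$-signotope $\sigma$ on $[n]$ together with disjoint obstructed $(r-1)$-subsets $I,J$, the idea is to define $\hat\sigma$ on $[n+2]$ in rank~$r+2$ by a rule under which every $(r+2)$-subset containing \emph{both} new elements $n+1$ and $n+2$ inherits its sign from the corresponding $r$-subset of $\sigma$, while subsets containing exactly one new element are assigned signs by a canonical, ``generic-position'' prescription that ensures the signotope axiom on all $(r+3)$-packets. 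The candidate obstructed pair in the lifted signotope would then be $\hat I = I \cup \{n+1,n+2\}$ and $\hat J = J \cup \{n+1,n+2\}$.

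The proof then splits into (i) verifying that $\hat\sigma$ is a valid signotope, which should reduce to the axiom for $\sigma$ on packets containing both suspension elements and to a small case analysis elsewhere; and (ii) showing that any hypothetical $2$-extension $\hat\sigma^\ast$ of $\hat\sigma$ realizing $\hat I,\hat J$ as fliples would descend, via the deletion operator applied first to $n+2$ and then to $n+1$, to a $2$-extension of $\sigma$ realizing $I,J$ as fliples, contradicting the assumption. The parity check $|\hat I\cap\hat J|+(r+2) \equiv r \pmod 2$ is preserved under suspension, and the boundary case where Corollary~\ref{corollary:nonemptyintersection} fails is precisely the one our construction targets, which gives some reason to hope the obstruction is structural rather than accidental to small $n$.

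The hard part will be step (ii): the signs of $\hat\sigma^\ast$ on $(r+2)$-packets that mix the new extending element with only one of the suspension elements can a priori interact in ways that prevent clean descent, so the signs assigned to $\hat\sigma$ on packets containing exactly one of $n+1,n+2$ must be chosen rigidly enough that any valid $\hat\sigma^\ast$ is essentially forced to place the extending element $k$ in a position compatible with deleting both suspension elements in sequence. If no such clean suspension works, the fallback is to abstract the combinatorial pattern common to the five SAT-computed examples --- most likely a repeated block structure indexed by pairs of elements, given that all examples have $n = 2r$ --- and to prove non-extendability directly by a coupled sign-propagation argument running along the two would-be fliples, exploiting the even-parity constraint at every alternation step.
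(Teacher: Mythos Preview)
The statement is a \emph{conjecture}; the paper does not prove it. The only evidence offered is SAT-verified non-$2$-extendable examples in ranks $4,6,8,10,12$ (Section~\ref{sec:computerassisted}), and the authors explicitly state that constructing an infinite family and proving non-extendability for it remains future work. There is thus no proof in the paper to compare against --- you are sketching an attack on an open problem.

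That said, your suspension plan has a concrete flaw. The lifted pair $\hat I = I\cup\{n+1,n+2\}$ and $\hat J = J\cup\{n+1,n+2\}$ has $|\hat I\cap\hat J|=2$, hence is not disjoint, whereas the definition of $2$-extendability in Section~\ref{sec:reformulation} concerns only pairwise \emph{disjoint} $(r{-}1)$-subsets. Even if you prove that $\hat\sigma$ admits no extension making $\hat I,\hat J$ fliples, this does not witness non-$2$-extendability; you would still need to produce a disjoint obstructing pair, and your descent argument in step~(ii) relies precisely on both suspension elements lying in each of $\hat I$ and $\hat J$. The paper's own recursive heuristic (property~(f) in Section~\ref{ssec:structure}) sidesteps this issue: it embeds the rank-$(r{-}2)$ example so that the obstructing pair in rank~$r$ remains the disjoint sets $\{2,4,\ldots,2r-2\}$ and $\{1,3,\ldots,2r-3\}$. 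But the authors use that construction only to narrow the SAT search, not as a proof, and they explicitly caution that their structural constraints ``might also be too strong.'' Your fallback --- abstracting the common pattern of the SAT examples --- is therefore exactly the direction the paper already identifies as open.
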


\subsection{Signotopes as a rich subclass of oriented matroids}
\label{sec:many_signotopes}

It is well known that the number of oriented matroids of rank $r$ on $n$ elements is $2^{\Theta(n^{r-1})}$ \cite[Corollary~7.4.3]{BjoenerLVWSZ1993}. 
As shown by Balko~\cite[Theorem~3]{Balko19}, $r$-signotopes are a rich subclass of oriented matroids of rank~$r$; 
see 
\ifthenelse{\boolean{appendix}} 
{Appendix~\ref{app:proof_nosignotopes}}
{the full version~\cite{fullversion}} 
for a shorter proof of the following proposition.

\begin{restatable}[{Balko~\cite{Balko19}}]{proposition}{propmanysignotopes}
	\label{prop:many_signotopes}
	For $r \geq 3$,
the number of $r$-signotopes on $[n]$ 
	is $2^{\Theta(n^{r-1})}$.
\end{restatable}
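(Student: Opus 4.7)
My plan for \cref{prop:many_signotopes} has two parts. For the upper bound, I would compare signotopes with oriented matroids. Every $r$-signotope $\sigma$ on $[n]$ should define a rank-$r$ uniform oriented matroid on $[n]$ via the chirotope $\chi(i_1,\ldots,i_r)=\sgn(\pi)\cdot\sigma(\{i_1,\ldots,i_r\})$, where $\pi$ is the permutation sorting $(i_1,\ldots,i_r)$; the signotope monotonicity on $(r+1)$-packets should imply (a version of) the three-term Grassmann-Pl\"ucker sign relations that certify the chirotope axioms. Distinct signotopes give distinct chirotopes, so the map is injective. Since by \cite[Corollary~7.4.3]{BjoenerLVWSZ1993} the number of rank-$r$ oriented matroids on $n$ elements is $2^{O(n^{r-1})}$, the same bound then follows for $r$-signotopes.

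For the lower bound, the plan is to construct $2^{\Omega(n^{r-1})}$ distinct $r$-signotopes on $[n]$ by realizing them geometrically. A natural candidate is to take $n$ points in sufficiently general position near the moment curve $\gamma(t)=(t,t^2,\ldots,t^{r-1})$ in $\mathbb{R}^{r-1}$ and consider their chirotope $\chi_P(i_1,\ldots,i_r)=\sgn\det(p_{i_2}-p_{i_1},\ldots,p_{i_r}-p_{i_1})$. First, one verifies that $\chi_P$ is in fact a signotope: for $P=\gamma$ itself all sorted chirotope entries are $+1$, and for sufficiently small generic perturbations the sign sequence along any $(r+1)$-packet can flip at most once, thanks to the Vandermonde structure of $\gamma$ (which makes the relevant determinants factor through a product of differences with a prescribed sign). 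Second, one invokes a Goodman-Pollack-Sturmfels type lower bound, by which perturbations of the moment curve already realize $2^{\Omega(n^{r-1})}$ different order types.

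The main obstacle is the first step of the lower bound: establishing that moment-curve perturbations really do yield signotopes (and not just arbitrary oriented matroids), which requires controlling the sign patterns of all $(r+1)$-packet sequences simultaneously rather than just the three-term Grassmann-Pl\"ucker relations. If this identification turns out to be too delicate to handle cleanly, I would fall back on a purely combinatorial construction: exhibit a base signotope $\sigma_0$ together with a family $\mathcal{F}$ of $\Theta(n^{r-1})$ pairwise ``independent'' fliples (meaning that for every subset $\mathcal{F}'\subseteq\mathcal{F}$ the map obtained from $\sigma_0$ by flipping exactly the signs on $\mathcal{F}'$ is again an $r$-signotope). Such a family would directly produce $2^{|\mathcal{F}|}=2^{\Omega(n^{r-1})}$ distinct $r$-signotopes and could plausibly be built by placing the fliples on a ``grid-like'' collection of $r$-subsets spread across $[n]$, where the defining $(r+1)$-packet constraints decouple.
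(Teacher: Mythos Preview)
Your upper bound is fine and matches one of the two arguments the paper offers: once you know that $r$-signotopes embed into rank-$r$ uniform oriented matroids, the bound $2^{O(n^{r-1})}$ from \cite[Corollary~7.4.3]{BjoenerLVWSZ1993} applies directly. (The paper also gives a self-contained induction via projections, $f_r(n)\le (f_{r-1}(n-1))^n$, but your route is equally valid.)

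The lower bound, however, is a genuine gap. Your geometric plan identifies its own obstacle correctly and then does not resolve it: perturbations of the moment curve realize $2^{\Omega(n^{r-1})}$ distinct \emph{oriented matroids}, but you have no argument that these stay inside the signotope subclass. The ``Vandermonde structure'' remark is not enough---the signotope condition requires monotonicity along \emph{every} $(r+1)$-packet simultaneously, and a generic perturbation will typically violate this somewhere. Your fallback (``grid-like independent fliples'') is the right instinct but remains a wish rather than a construction; you do not say which $r$-subsets form the family, nor why the packet constraints decouple.

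The paper carries out exactly such a combinatorial construction, and the missing idea is a \emph{weight function}. Set $\phi(x_1,\ldots,x_r)=x_1+\cdots+x_{r-1}-x_r$ and observe that along any $r$-packet $X=(x_1,\ldots,x_{r+1})$ one has $\phi(X_1)>\phi(X_2)>\cdots>\phi(X_r)$ and $\phi(X_r)<\phi(X_{r+1})$. Now partition $[n]$ into $r$ blocks $N_1,\ldots,N_r$ of size $m=n/r$, pick a threshold $T$, and define $\sigma$ by a simple rule in terms of whether $\phi$ exceeds~$T$ (plus a condition on how the tuple meets $N_r$), leaving the sign \emph{free} exactly when $\phi=T$. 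The monotonicity of $\phi$ along packets is precisely what guarantees at most one sign change regardless of how the free signs are chosen, and for a suitable $T$ the number of ``splitted'' $r$-tuples hitting the threshold is $\binom{m}{r-1}=\Theta(n^{r-1})$. This is the concrete realization of your fallback idea; the weight function is what makes the decoupling work.
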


In ranks 1 and 2 there are $2^n$ and $n!$ signotopes on $[n]$, respectively. 
Rank 1 signotopes are mappings from $[n]$ to $\{+,-\}$ without any additional property and 2-signotopes are permutations. 
For rank~$r \ge 3$,
the precise number of $r$-signotopes on~$[n]$ 
has been computed for small values of $r$ and $n$; see 
\href{https://oeis.org/A006245}{A6245} (rank~$3$)
and
\href{https://oeis.org/A060595}{A60595} to \href{https://oeis.org/A060601}{A60601} (rank~$4$ to rank~$10$)
on the OEIS~\cite{OEIS}.

\section{Preliminaries}
\label{sec:prelim}

We now prepare for the proof of Theorem~\ref{theorem:levi_odd}.
As discussed in Section~\ref{ssec:signotopes}, signotopes of rank~3 can be represented by an arrangement with $x$-monotone pseudolines.
The order of the crossings from left to right gives a partial order on the 2-subsets.
In general, $r$-signotopes can be represented by a sweepable arrangement of pseudohyperplanes in $\mathbb{R}^{r-1}$, which similarly allows to define a partial order on $(r-1)$-subsets which correspond to the crossings of $r-1$ elements.
This partial order is combinatorially defined as follows. 
For an $r$-signotope $\sigma$ and every $r$-subset $X = (x_1, \ldots, x_r)$ define:
\begin{align*}
	X_1 \succ X_2 \succ \cdots \succ X_r
	\quad  \text{if}   \quad  
	&\sigma(x_1, \ldots, x_r) = +,
	\qquad \text{and} 
	\\
	X_1 \prec X_2 \prec \cdots\prec X_r
	\quad  \text{if}  \quad 
	&\sigma(x_1, \ldots, x_r) = - .
\end{align*}
Recall that we use the convention $x_1 \leq \ldots \leq x_r$ and $X_i = X \backslash \{x_i\}$.
By taking the transitive closure of all relations obtained from $r$-subsets,
we obtain a partial order on the $(r-1)$-subsets  corresponding to~$\sigma$
 \cite[Lemma~10]{FelsnerWeil2001}.

If we rotate an arrangement of pseudolines, i.e., we choose another unbounded cell as the top cell, we get a pseudoline arrangement 
with the same cell structure.
However, the signotope does not stay the same. 
If we rotate only a single pseudoline, then the 
orientation of the triangle spanned by 3 pseudolines stays the same if and only if the rotated pseudoline is not involved (see for example the triangle spanned by $\{2,3,4\}$ in the left-hand side arrangement, resp.\ $\{1,2,3\}$ in the right-hand side arrangement in Figure~\ref{fig:rotation}). 
When rotating \emph{clockwise}, the first element of $\sigma$ becomes the last one in the rotated signotope $\sigma_{\rot}$.
In terms of the $3$-signotope $\sigma$ the signs of the rotated signotope $\sigma_{\rot}$ are:
$\sigma_{\rot}(a,b,c) = \sigma(a+1,b+1,c+1)$ if $c \neq n$ and 
$\sigma_{\rot}(a,b,n) = - \sigma(1,a+1,b+1)$.

\begin{figure}[htb]
	\includegraphics[page = 1]{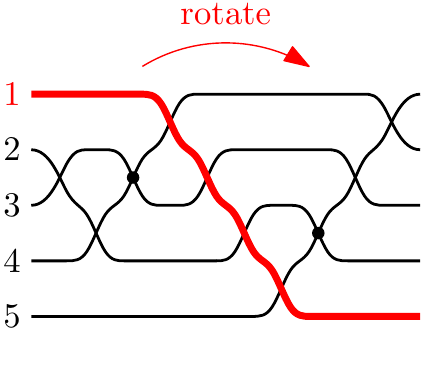} \hfill
	\includegraphics[page = 2]{rotation_newer} \hfill
	\includegraphics[page = 4]{rotation_newer} 
	
	\caption{An illustration of a clockwise rotation of pseudolines. The rotated pseudoline is highlighted red.}
	\label{fig:rotation} 
\end{figure}

In general, 
we define the \emph{clockwise rotated} signotope $\sigma_{\rot}$ of a given $r$-signotope $\sigma$ as:
\begin{align*}
	\sigma_{\rot}(x_1, \ldots, x_r) = 
	\begin{cases}
		-\sigma(1,x_1+1, \ldots, x_{r-1}+1) & \text{ if } x_1 < x_2 < \cdots < x_r=n, \\
		\phantom{-}\sigma(x_1+1, \ldots, x_r+1) & \text{ if } x_1 < x_2 < \cdots < x_r < n.
	\end{cases}
\end{align*}

Here we use the usual convention $- \cdot + = -$ and $- \cdot - = +$.
To keep track of the index shift caused by a clockwise rotation,
we define 
$x_{\rot} = x-1$ if $x \neq 1$ and
$1_{\rot} = n$,
and
\[
X_{\rot} = \{x_{\rot} : x \in X \} = \begin{cases}
	(x_1 -1, x_2 -1, \ldots, x_k-1) \quad & \text{if }  x_1 > 1;\\
	(x_2 -1, \ldots, x_k-1 , n) \quad & \text{if } x_1 = 1
\end{cases} 
\]
for any subset $X = (x_1, \ldots, x_k)$ of $[n]$ with $x_1 < \ldots <x_k$.
Note that this allows us to write $\sigma_{\rot}(X_{\rot}) = \sigma(X)$ if $1\not\in X$ and $\sigma_{\rot}(X_{\rot}) = - \sigma(X)$ if $1\in X$.

As the following lemmas show, this is indeed an $r$-signotope, which moreover has essentially the same fliples. 
The proofs 
and further properties are deferred to \ifthenelse{\boolean{appendix}} 
{Appendix~\ref{sec:properties}.}
{the full version~\cite{fullversion}.} 

\begin{restatable}{lemma}{rotissignotope}
\label{lem:rot_is_signotope}
	Let $\sigma$ be an $r$-signotope on $[n]$.
	Then $\sigma_{\rot}$ is an $r$-signotope on $[n]$.
\end{restatable}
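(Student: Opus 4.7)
The plan is to verify the defining monotonicity condition of an $r$-signotope on every $(r+1)$-packet $Y = (y_1, \ldots, y_{r+1})$ of $[n]$, by reducing each instance to the monotonicity of $\sigma$ on a suitable $(r+1)$-packet of $[n]$. The natural split is on whether or not $y_{r+1} = n$, since this governs which branch of the piecewise definition of $\sigma_{\rot}$ is used for each removed element.

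If $y_{r+1} < n$, then none of the sets $Y_j$ contains~$n$ as its largest element, so every $\sigma_{\rot}(Y_j)$ is given by the second (unshifted-sign) branch. The sequence $\sigma_{\rot}(Y_1),\dots,\sigma_{\rot}(Y_{r+1})$ is then literally the sequence $\sigma(Y_1{+}1),\dots,\sigma(Y_{r+1}{+}1)$ obtained from the shifted packet $Y{+}1 = (y_1{+}1,\dots,y_{r+1}{+}1) \in \binom{[n]}{r+1}$, where the signotope property of $\sigma$ immediately yields at most one sign change. This case is essentially just bookkeeping.

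The main case is $y_{r+1} = n$. Here I would consider the auxiliary $(r+1)$-packet $Z = (1, y_1{+}1, y_2{+}1, \ldots, y_r{+}1)$ in $\binom{[n]}{r+1}$ (valid because $y_1 \ge 1$ and $y_r \le n-1$). Removing the smallest element $1$ of $Z$ gives precisely $Y_{r+1}{+}1$, for which $Y_{r+1}$ does not contain $n$, so $\sigma(Z \setminus \{1\}) = \sigma_{\rot}(Y_{r+1})$. Removing $y_j{+}1$ from $Z$ for $j \in \{1,\ldots,r\}$ matches the first branch of the definition applied to $Y_j$ (which still contains $n$), giving $\sigma(Z \setminus \{y_j{+}1\}) = -\sigma_{\rot}(Y_j)$. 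Thus applying the signotope property of $\sigma$ to the packet $Z$ yields that the sequence
\[
\sigma_{\rot}(Y_{r+1}),\ -\sigma_{\rot}(Y_1),\ -\sigma_{\rot}(Y_2),\ \ldots,\ -\sigma_{\rot}(Y_r)
\]
has at most one sign change.

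It remains to translate this statement into the required condition on $\sigma_{\rot}(Y_1),\ldots,\sigma_{\rot}(Y_{r+1})$, which I expect to be the only delicate step. Here I would simply check by a short case distinction on the shape of a length-$(r+1)$ sign sequence with at most one sign change (all equal, or first a block of $+$ then a block of $-$, or vice versa) that cyclically moving the first entry to the end while simultaneously negating all signs that get cycled through preserves the "at most one sign change" property. In each of the four cases the resulting sequence $\sigma_{\rot}(Y_1),\ldots,\sigma_{\rot}(Y_{r+1})$ is once again either constant or consists of one block of $+$ followed by one block of $-$ (or the reverse), completing the verification. The main obstacle is purely combinatorial sign-tracking in this last step; no deep idea is needed beyond choosing the right auxiliary packet $Z$.
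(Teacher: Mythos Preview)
Your proposal is correct and follows essentially the same approach as the paper: split on whether the packet contains~$n$, and in the nontrivial case relate the $\sigma_{\rot}$-sequence on $Y$ to the $\sigma$-sequence on the pre-rotation packet $Z=(1,y_1{+}1,\ldots,y_r{+}1)$. The paper simply asserts the final cyclic/sign-flip observation in one line, whereas you spell out a short case distinction; otherwise the arguments coincide.
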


\begin{restatable}{lemma}{rotfliples}
\label{lemma:fliple_rotation}
	Let $\sigma$ be an $r$-signotope and let $F$ be a fliple of $\sigma$. 
	Then $F_{\rot}$ is a fliple in the clockwise rotated signotope~$\sigma_{\rot}$. 
\end{restatable}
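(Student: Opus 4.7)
The plan is to reduce the statement to the preceding \cref{lem:rot_is_signotope} by observing that clockwise rotation commutes with flipping an entry.

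First I would denote by $\sigma'$ the $r$-signotope obtained from $\sigma$ by flipping the sign at $F$, which exists precisely because $F$ is a fliple of $\sigma$. My goal is to show that $(\sigma')_{\rot}$ agrees with $\sigma_{\rot}$ on every $r$-subset except $F_{\rot}$, where the two signs differ. Once this is established, both assignments $+$ and $-$ at $F_{\rot}$ extend $\sigma_{\rot}\restriction_{\binom{[n]}{r}\setminus\{F_{\rot}\}}$ to a signotope, namely $\sigma_{\rot}$ itself and $(\sigma')_{\rot}$ (the latter is a signotope by \cref{lem:rot_is_signotope} applied to $\sigma'$). Hence $F_{\rot}$ is a fliple of $\sigma_{\rot}$, as desired.

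The comparison is a straightforward case analysis based on the definition of the rotation. For any $r$-subset $X$ of $[n]$ we compute $(\sigma')_{\rot}(X_{\rot})$ and compare with $\sigma_{\rot}(X_{\rot})$. If $1 \notin X$, then both rotations are defined by $\tau_{\rot}(X_{\rot}) = \tau(X)$, so the values agree whenever $X \neq F$ and differ exactly when $X = F$. If $1 \in X$, then both rotations pick up a minus sign, namely $\tau_{\rot}(X_{\rot}) = -\tau(X)$, and again the two agree for $X \neq F$ and differ exactly when $X = F$. In either case, the only disagreement between $\sigma_{\rot}$ and $(\sigma')_{\rot}$ is at $X_{\rot} = F_{\rot}$.

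I do not expect a real obstacle here: the only thing to be careful about is handling the sign twist introduced when $1 \in X$ in the definition of $\sigma_{\rot}$, but since the same twist is applied to $\sigma$ and to $\sigma'$, the two minus signs cancel in the comparison and the argument goes through cleanly. Everything else follows from invoking \cref{lem:rot_is_signotope} as a black box.
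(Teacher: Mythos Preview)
Your argument is correct and takes a genuinely different route from the paper. The paper verifies directly, for every $r$-packet $X'$ containing $F_{\rot}$, that $F_{\rot}$ is flipable in $X'$: it pulls $X'$ back to the packet $X$ with $X_{\rot}=X'$, uses that $F$ is flipable in $X$, and then does a case analysis on whether $1\in X$ and on the position of $F$ inside $X$ (the cases $F=X_1$, $F=X_2$, $3\le i\le r$, $F=X_{r+1}$ are treated separately). Your approach instead observes that rotation commutes with ``flip at $F$'': since $\tau_{\rot}(X_{\rot})=\varepsilon_X\,\tau(X)$ with a sign $\varepsilon_X$ depending only on $X$ (not on $\tau$), the rotated signotopes $\sigma_{\rot}$ and $(\sigma')_{\rot}$ differ exactly at $F_{\rot}$. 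Then a single application of \cref{lem:rot_is_signotope} to $\sigma'$ finishes the proof. This is shorter and avoids the position-by-position case analysis; the paper's argument, on the other hand, makes visible exactly how the sign pattern on each packet behaves under rotation, which is closer in spirit to the proof of \cref{lem:rot_is_signotope} itself.
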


\section{Extension theorem for signotopes} 
\label{sec:extension_theorem_odd}

In this section, we give a proof for the extension theorem for signotopes of odd rank. 
The central ingredient of our proof is as follows. 
If  $\sigma$ is an $r$-signotope on $[n]$ and the prescribed two $(r-1)$-sets $I$ and $J$ are incomparable in the partial order associated with $\sigma$ (see Section~\ref{sec:prelim}), then $\sigma$ is extendable by a ``last'' element $n+1$ such that $I \cup \{n+1\}$ and $J \cup \{n+1\}$ are fliples in the extension.
Figure~\ref{fig:perturbation} gives an illustration for the rank~3 case.
More abstractly we can extend the signotope when there is a down-set in the partial order on $(r-1)$-sets which has 
$I$ and $J$ as maximal elements.
A \emph{down-set} of a partial order $(\cal{P}, \prec)$ is a subset $\cal{D} \subseteq \cal{P}$ 
such that for all $p \in \cal{P}$ and $d \in \cal{D}$ with $p \preceq d$ it holds $p \in \cal{D}$.
Similarly, an \emph{up-set} is a subset $\calU \subseteq \cal{P}$ such that for all $p \in \cal{P}$ and $u \in \calU$ with $p \succeq u$ it holds $p \in \calU$.

\begin{proposition}[Extension for incomparable elements] \label{prop:down-set}
	Let $(\cal{P},\prec)$ be the partial order on $(r-1)$-sets corresponding to an $r$-signotope~$\sigma$ on $[n]$. 
	For every down-set $\cal{D} \subseteq \cal{P}$ there exists an $r$-signotope $\sigmaast$ on $[n+1]$ such that all $r$-subsets
	of the form $M \cup \{n+1\}$, where $M$ is a maximal element of $\cal{D}$, are fliples of $\sigmaast$ and $\sigmaast\delete{n+1} = \sigma$.
\end{proposition}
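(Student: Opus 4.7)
The plan is to construct $\sigmaast$ explicitly by prescribing the signs of the new $r$-subsets according to $\mathcal{D}$. Concretely, I would set $\sigmaast(X) := \sigma(X)$ for every $r$-subset $X \subseteq [n]$ (which ensures $\sigmaast\delete{n+1} = \sigma$), and set $\sigmaast(M \cup \{n+1\}) := +$ if $M \in \mathcal{D}$ and $\sigmaast(M \cup \{n+1\}) := -$ otherwise, for every $M \in \binom{[n]}{r-1}$. The remainder of the proof then consists of two verifications, both exploiting the compatibility between $\mathcal{D}$ being a down-set and the monotonicity of the chains on $(r-1)$-subsets dictated by $\sigma$.

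To verify the signotope axiom, the only nontrivial $(r+1)$-packets are those of the form $Y = Y' \cup \{n+1\}$ with $Y' = (y_1, \ldots, y_r) \in \binom{[n]}{r}$. On such a packet the relevant sequence reads $\sigmaast(Y'_1 \cup \{n+1\}), \ldots, \sigmaast(Y'_r \cup \{n+1\}), \sigma(Y')$. If $\sigma(Y') = +$, then $Y'_1 \succ \cdots \succ Y'_r$ in $\mathcal{P}$, and since $\mathcal{D}$ is a down-set, membership in $\mathcal{D}$ can only turn on (never off) along this decreasing chain, yielding a non-decreasing sequence that remains non-decreasing after appending the final $+$. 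The case $\sigma(Y') = -$ is symmetric, giving a non-increasing sequence terminated by a final $-$.

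For the fliple verification, fix a maximal element $M$ of $\mathcal{D}$. Every $(r+1)$-packet containing $M \cup \{n+1\}$ has the form $Y = (M \cup \{y\}) \cup \{n+1\}$ for some $y \in [n] \setminus M$, and writing $M = Y'_i$ in $Y' := M \cup \{y\}$, the maximality of $M$ in $\mathcal{D}$ forces $Y'_j \notin \mathcal{D}$ for every $Y'_j \succ M$, while the down-set property forces $Y'_j \in \mathcal{D}$ for every $Y'_j \prec M$. In the case $\sigma(Y') = +$ this produces the sequence $(-, \ldots, -, +, +, \ldots, +, +)$ whose first $+$ sits at position $i$; flipping the entry at position $i$ shifts the switch to position $i+1$, preserving non-decreasingness. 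The case $\sigma(Y') = -$ is symmetric, with the switch moving from position $i+1$ to position $i$ while the sequence remains non-increasing. Thus both choices of $\sigmaast(M \cup \{n+1\})$ yield signotopes, i.e.\ $M \cup \{n+1\}$ is a fliple.

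I do not expect any real obstacle: the structural content of the proof is the observation that the direction of the chain built from the sign of $\sigma(Y')$ aligns with the direction in which $\mathcal{D}$-membership propagates, which is precisely what makes the two monotonicity-based conditions (signotope axiom and fliple property) match up. The only care needed is the bookkeeping to locate $M$ inside each $(r+1)$-packet and to keep track of which end of the chain gets affected by a flip.
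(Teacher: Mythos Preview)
Your proposal is correct and follows essentially the same approach as the paper: the explicit construction of $\sigmaast$ is identical, and the verification of the signotope axiom via the case split on $\sigma(Y')$ together with the down-set monotonicity matches the paper's argument. Your fliple verification is actually spelled out in more detail than in the paper, which simply notes that $M\cup\{n+1\}$ is adjacent to the unique sign change in every packet containing it; your explicit analysis of where the switch sits and where it moves after flipping is exactly what that remark encodes.
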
 

\begin{proof}
	Define the extended $r$-signotope $\sigmaast$ on $[n+1]$ as follows:
	\begin{align*}
		\sigmaast(x_1 \ldots, x_r) = \begin{cases}
			\sigma(x_1, \ldots, x_r) & \text{ if } x_1, \ldots, x_r \in [n];\\
			+ & \text{ if } x_r = n+1 \text{ and } \{x_1, \ldots, x_{r-1}\} \in \cal{D}; \\
			- & \text{ if } x_r = n+1 \text{ and } \{x_1, \ldots, x_{r-1}\} \not\in \cal{D}.
		\end{cases}
	\end{align*}

    First we show that $\sigmaast$ is an $r$-signotope on $[n+1]$.
    Consider an $r$-packet $X = (x_1, \ldots x_{r+1})$. 
    We need to show that the sequence 
    \begin{align*}
    	\sigmaast(X_{1}), \sigmaast(X_{2}), \ldots, \sigmaast(X_{r+1})
    \end{align*}
    has at most one sign change. 
    
    If $x_{r+1}\leq n$, then all signs on the considered $r$-subsets are the same as for $\sigma$. Since $\sigma$ is an $r$-signotope, there is at most one sign change in the sequence. 
    
    In the other case, we have $x_{r+1} = n+1$.
    For all $j \leq r$ we have $n+1 \in X_j$.
    Furthermore, $\sigmaast (X_{r+1}) = \sigma(X_{r+1})$ because $n+1 \not\in X_{r+1}$.
    We consider two cases. First, if $\sigma(X_{r+1}) = +$ we have by definition of the partial order 
    \begin{align*}
    	X\backslash \{x_{r+1}, x_i \} \succ X\backslash \{x_{r+1}, x_j \} \qquad \text{ for } i<j.
    \end{align*} 
    By the property of a down-set this means that, whenever $X\backslash \{x_{r+1}, x_i \} \in \calD$, we also have $X\backslash \{x_{r+1}, x_j \} \in \calD$ for $i<j$. 
    Let $i^{\ast}$ be the smallest integer such that $X\backslash \{x_{r+1}, x_{i^{\ast}} \} \in \calD$. Then by definition of $\sigmaast$ we have $\sigmaast(X_j) = -$ for all $j < i^\ast$ and $\sigmaast(X_j) = +$ for all $j \geq i^\ast$.

    Similar arguments apply if $\sigma(X_{r+1}) = -$.
    Then we have 
    \begin{align*}
    	X\backslash \{x_{r+1}, x_i \} \prec  X \backslash \{x_{r+1}, x_j \} \qquad \text{ for } i<j.
    \end{align*} 
    This time let $i^{\ast}$ be the smallest integer such that $X\backslash \{x_{r+1}, x_{i^{\ast}} \} \not\in \cal{D}$. Then by definition of~$\sigmaast$ we have $\sigmaast(X_j) = +$ for all $j \leq i^\ast$ and $\sigmaast(X_j) = -$ for all $j > i^\ast$.
    
    Let $M$ be a maximal element of the down-set $\calD$. By the analysis above it follows that  $M \cup \{n+1\}$ is
    adjacent to a sign change in each packet in which it is contained. Hence it is a fliple. 
\end{proof}

From this proposition it follows that for all $r\geq 2$ all $r$-signotopes are 1-extendable.
Moreover the 1-extension contains the extending element at the last position.
\begin{corollary}[1-extendability]
\label{cor:1extendability}
    For $r \geq 2$ let $\sigma$ be an $r$-signotope on $[n]$ and $I$ an $(r-1)$-subset. 
    Then there is an extending $r$-signotope $\sigmaast$ on $[n+1]$ elements such that $I \cup \{n+1\}$ is a fliple and $\sigmaast\delete{n+1} = \sigma$.
\end{corollary}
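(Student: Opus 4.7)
The plan is to reduce the statement to a direct application of Proposition~\ref{prop:down-set}, for which it suffices to exhibit a down-set in the partial order on $(r-1)$-subsets associated to $\sigma$ that has the prescribed set $I$ as a maximal element. The most economical choice is the principal down-set generated by $I$, namely
\[
\calD \mathop{:=} \{ Q \in \calP : Q \preceq I \},
\]
where $(\calP,\prec)$ is the partial order on $(r-1)$-subsets of $[n]$ corresponding to $\sigma$.

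First I would verify that $\calD$ is indeed a down-set: if $Q \in \calD$ and $P \preceq Q$, then by transitivity $P \preceq I$, so $P \in \calD$. Next I would observe that $I$ is a maximal element of $\calD$: no $Q \in \calD$ satisfies $Q \succ I$, since membership in $\calD$ requires $Q \preceq I$, and $I$ is clearly in $\calD$ via $I \preceq I$.

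Now I would apply Proposition~\ref{prop:down-set} to the down-set $\calD$. This yields an $r$-signotope $\sigmaast$ on $[n+1]$ with $\sigmaast\delete{n+1} = \sigma$ such that $M \cup \{n+1\}$ is a fliple of $\sigmaast$ for every maximal element $M$ of $\calD$. Since $I$ is one such maximal element, $I \cup \{n+1\}$ is a fliple of $\sigmaast$, and taking $k = n+1$ together with $I^\ast = I \cup \{n+1\}$ we verify $I^\ast\delete{n+1} = I$. This gives exactly the extension required by the definition of $1$-extendability.

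There is no real obstacle here beyond correctly identifying the down-set; the work has been done in Proposition~\ref{prop:down-set}. The only mild point worth stating explicitly is that $\calD$ may contain other maximal elements besides $I$ (whenever the partial order has several incomparable upper bounds below $I$, or rather incomparable elements below $I$ that are themselves maximal in $\calD$). This is harmless: Proposition~\ref{prop:down-set} produces fliples for all such maximal elements simultaneously, and the corollary only demands that $I \cup \{n+1\}$ be among them.
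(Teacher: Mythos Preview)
Your proof is correct and matches the paper's own (elided) argument exactly: take the principal down-set generated by $I$ and apply Proposition~\ref{prop:down-set}. One small slip in your closing commentary: in the principal down-set $\calD = \{Q : Q \preceq I\}$ the element $I$ is not merely a maximal element but the unique maximum, so there can be no other maximal elements; your worry there is unfounded, though as you note it would be harmless anyway.
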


The following two propositions show that,
for odd rank,
we can always find a rotation of the corresponding signotope such that the two prescribed $(r-1)$-subsets are incomparable. 
We can then use Proposition~\ref{prop:down-set} to define an extension.

\begin{restatable}{proposition}{propositionrotincomp}
\label{prop:rot_incomp}
    Let $\sigma$ be an $r$-signotope on $[n]$.
    For two $(r-1)$-subsets $I,J$ with $I \prec J$ and $1 \notin I \cap J$, it holds
    $I_{\rot}$ and $J_{\rot}$ are incomparable in $\prec_{\rot}$ 
    or $I_{\rot} \prec_{\rot} J_{\rot}$.
\end{restatable}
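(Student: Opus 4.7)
My plan is to prove the proposition by induction on the length $k$ of a chain of direct relations $I = Z_0 \prec Z_1 \prec \cdots \prec Z_k = J$ witnessing $I \prec J$ in $\sigma$. The preparatory step is a local classification of how direct relations transform under rotation: using the identifications $(X_1)_{\rot} = (X_{\rot})_r$ and $(X_i)_{\rot} = (X_{\rot})_{i-1}$ for $i \geq 2$ when $1 \in X$ (together with $\sigma_{\rot}(X_{\rot}) = -\sigma(X)$), and the straightforward identification $(X_i)_{\rot} = (X_{\rot})_i$ when $1 \notin X$, a short case analysis on $\sigma(X) = \pm$ shows that a direct relation $A \prec B$ arising from an $r$-packet is \emph{preserved} under rotation (i.e., $A_{\rot} \prec_{\rot} B_{\rot}$) whenever $1 \notin A \cap B$, and \emph{reversed} (i.e., $B_{\rot} \prec_{\rot} A_{\rot}$) whenever $1 \in A \cap B$.

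For the base case $k = 1$, the hypothesis $1 \notin I \cap J$ places the single direct relation in the preserved case, yielding $I_{\rot} \prec_{\rot} J_{\rot}$ strictly and hence the desired conclusion.

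For the inductive step with $k \ge 2$, I would assume without loss of generality that $1 \notin Z_0$ (the case $1 \notin Z_k$ being symmetric), and let $t$ be the largest index with $1 \notin Z_t$. The prefix $Z_0 \prec \cdots \prec Z_t$ has both endpoints avoiding $1$, so the inductive hypothesis gives $(Z_t)_{\rot} \not\prec_{\rot} (Z_0)_{\rot}$. If $t = k$, we are done. Otherwise the tail $Z_{t+1}, \ldots, Z_k$ lies entirely in the `upper' region of sets containing $1$, so the transition link $Z_t \prec Z_{t+1}$ is preserved while every tail link is reversed, yielding in $\sigma_{\rot}$ both $(Z_t)_{\rot} \prec_{\rot} (Z_{t+1})_{\rot}$ and $(Z_k)_{\rot} \prec_{\rot} \cdots \prec_{\rot} (Z_{t+1})_{\rot}$.

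The hard part is closing the argument when $1 \in J$: the derived comparisons place $(Z_0)_{\rot}$ and $(Z_k)_{\rot}$ both below $(Z_{t+1})_{\rot}$ in $\prec_{\rot}$ but do not directly compare them. I expect to resolve this by a contradiction argument: a witnessing chain for $(J)_{\rot} \prec_{\rot} (I)_{\rot}$ in $\sigma_{\rot}$, pulled back under the inverse rotation and reinterpreted through the preserved/reversed dichotomy, would combine with the original chain $I \prec J$ to yield either a shorter violating chain (allowing the induction to kick in on a strictly smaller instance) or a closed structure inconsistent with the signotope axiom applied to an $(r+1)$-packet generated by the $r$-packets supporting consecutive tail links. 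Identifying such an $(r+1)$-packet and exploiting its monotonicity is the step where I expect to spend the most effort.
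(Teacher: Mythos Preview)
Your local classification --- that a direct relation $A \prec B$ with $|A\cap B|=r-2$ is preserved under rotation when $1\notin A\cap B$ and reversed when $1\in A\cap B$ --- is correct and is exactly the lemma the paper proves first. The base case $k=1$ is therefore fine.

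The gap is in the inductive step, precisely at the point you flag as ``the hard part''. When $t<k$ (so $1\in Z_{t+1},\ldots,Z_k=J$), the facts you derive,
\[
(Z_t)_{\rot}\not\prec_{\rot}(Z_0)_{\rot},\qquad (Z_t)_{\rot}\prec_{\rot}(Z_{t+1})_{\rot},\qquad (Z_k)_{\rot}\prec_{\rot}\cdots\prec_{\rot}(Z_{t+1})_{\rot},
\]
only place $(Z_0)_{\rot}$ and $(Z_k)_{\rot}$ both \emph{below} $(Z_{t+1})_{\rot}$; they say nothing about their mutual order. Your proposed rescue --- pull back a hypothetical chain $(Z_k)_{\rot}\prec_{\rot}\cdots\prec_{\rot}(Z_0)_{\rot}$ and hunt for a shorter violating instance or a contradiction ``from an $(r{+}1)$-packet'' --- does not close. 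Pulling the chain back gives a zig-zag of $\prec$ and $\succ$ relations in $\sigma$, but there is no mechanism to collapse a mixed zig-zag into either a shorter chain with the required $1\notin\cdot$ hypothesis or a single packet contradiction: the signotope axiom constrains one $(r{+}1)$-subset at a time, and two consecutive links of your tail generally span an $(r{+}1)$-subset that is already consistent with both orders. In short, the induction hypothesis is too weak to absorb the case $1\in J$.

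The paper sidesteps this entirely with one extra structural lemma. Define
\[
\mathcal U^{\sigma}_1=\{I:1\notin I,\ \sigma(I\cup\{1\})=+\},\qquad
\mathcal D^{\sigma}_1=\{I:1\notin I,\ \sigma(I\cup\{1\})=-\},\qquad
\mathcal H^{\sigma}_1=\{I:1\in I\}.
\]
A short packet computation shows that $\mathcal U^{\sigma}_1$ is an up-set and $\mathcal D^{\sigma}_1$ a down-set of $\prec$, and that rotation carries them to the analogous up-/down-sets $\mathcal U^{\sigma_{\rot}}_n$, $\mathcal D^{\sigma_{\rot}}_n$ of $\prec_{\rot}$. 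Assuming for contradiction that $I\prec J$ and $J_{\rot}\prec_{\rot} I_{\rot}$, these up-/down-set properties force $I$ and $J$ to lie in the \emph{same} class $\mathcal U^{\sigma}_1$ or $\mathcal D^{\sigma}_1$; in particular neither contains~$1$, and by the up-/down-set property \emph{every} $Z_i$ on any chain from $I$ to $J$ lies in that same class. Now your own preserved/reversed dichotomy applies to every link (since $1\notin Z_{i-1}\cap Z_i$), giving $I_{\rot}\prec_{\rot} J_{\rot}$, the contradiction. The point is that the up-set/down-set lemma is what eliminates your ``hard case'' $1\in J$ \emph{a priori}; without it the induction does not terminate.
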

The proof of Proposition~\ref{prop:rot_incomp} needs some more structural properties of the partial order and its interaction with the rotation. The details are deferred to
\ifthenelse{\boolean{appendix}}
{\cref{sec:properties}.}
{the full version~\cite{fullversion}.}

\begin{proposition} \label{lem:rotate}
	Let $r\geq 3$ be an odd integer, 
	let $\sigma$ be an $r$-signotope on~$[n]$
	and let $I,J$ be two disjoint $(r-1)$-subsets. 
	After at most $n-1$ clockwise rotations,
	$\sigma$, $I$, and~$J$ are transformed into $\sigma'$, $I'$, and~$J'$, resp., 
	such that $I'$ and~$J'$ are incomparable in the partial order $\prec'$ corresponding to~$\sigma'$.
\end{proposition}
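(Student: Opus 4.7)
The plan is to combine Proposition~\ref{prop:rot_incomp}---which guarantees that a single clockwise rotation cannot flip the direction of comparability---with a global parity identity for the $n$-fold rotation, in which the oddness of the rank~$r$ will be decisive. If $I$ and $J$ are already incomparable, no rotation is needed; otherwise, after possibly swapping names, I assume $I \prec J$ and look for the first stage at which comparability is lost.

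First, I would establish the identity $\sigma_{\rot^n} = (-1)^r \sigma$ for every $r$-signotope $\sigma$ on $[n]$. Using the equivalent form of the rotation $\sigma_{\rot}(X_{\rot}) = \sigma(X)$ if $1 \notin X$ and $\sigma_{\rot}(X_{\rot}) = -\sigma(X)$ if $1 \in X$, an easy induction on $k$ yields $\sigma_{\rot^k}(X_{\rot^k}) = \sigma(X)\cdot\prod_{j=0}^{k-1} \epsilon_j$, where $\epsilon_j = -1$ exactly when $1 \in X_{\rot^j}$. Since the rotation on labels is a cyclic shift, each element $x \in X$ satisfies $x_{\rot^j} = 1$ for exactly one $j \in \{0,\ldots,n-1\}$ (namely $j = x-1$), so the total number of sign flips over a full cycle is $|X| = r$. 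Combined with $X_{\rot^n} = X$, this gives $\sigma_{\rot^n}(X) = (-1)^r\sigma(X)$. For odd~$r$ this yields $\sigma_{\rot^n} = -\sigma$, and hence $\prec_{\rot^n}$ is precisely the reverse of $\prec$.

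With this identity in hand, I would argue by contradiction: suppose that $I_{\rot^k}$ and $J_{\rot^k}$ are comparable in $\prec_{\rot^k}$ for every $k \in \{0, 1, \ldots, n-1\}$. Since $I$ and $J$ are disjoint and rotation is a bijection on $[n]$, the sets $I_{\rot^k}$ and $J_{\rot^k}$ remain disjoint at every stage, so in particular $1 \notin I_{\rot^k} \cap J_{\rot^k}$ and Proposition~\ref{prop:rot_incomp} applies throughout. Iterating the proposition, the direction cannot flip, and we conclude $I_{\rot^k} \prec_{\rot^k} J_{\rot^k}$ for every $k \in \{0,\ldots,n-1\}$. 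One final application at $k = n-1$ forces that either $I_{\rot^n}$ and $J_{\rot^n}$ are incomparable in $\prec_{\rot^n}$ or $I_{\rot^n} \prec_{\rot^n} J_{\rot^n}$. But $X_{\rot^n} = X$ and, by the identity just proved, $\prec_{\rot^n}$ is the reverse of $\prec$, so the assumption $I \prec J$ translates into $J \prec_{\rot^n} I$, contradicting both alternatives. Therefore incomparability must occur at some stage $k \in \{1, \ldots, n-1\}$, proving the claim.

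The main obstacle I expect is the sign-bookkeeping for $\sigma_{\rot^n} = (-1)^r \sigma$: geometrically it just says that $n$ rotations bring the arrangement back to its starting position, but verifying the precise sign factor requires tracking carefully when the element $1$ sits inside the shifted $r$-subset. Once this identity is secured, the remainder is a routine iteration of Proposition~\ref{prop:rot_incomp}, and the oddness of $r$ enters in exactly one place, namely through $(-1)^r = -1$. This is also consistent with the even-rank behaviour: for even $r$ the identity becomes $\sigma_{\rot^n} = \sigma$, so the partial order is preserved after a full rotation, the contradiction step collapses, and the conjectured failure of $2$-extendability in even rank is not excluded by this method.
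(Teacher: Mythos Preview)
Your proposal is correct and follows essentially the same route as the paper: both establish that after $n$ rotations every sign is reversed (since each $r$-subset picks up exactly $r$ sign flips and $r$ is odd), hence the partial order is reversed, and then invoke Proposition~\ref{prop:rot_incomp} to conclude that the comparability direction cannot flip in a single step, forcing an incomparable stage in between. Your write-up is slightly more explicit about the sign identity $\sigma_{\rot^n}=(-1)^r\sigma$ and the contradiction at stage~$n$, but the argument is the same.
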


\begin{proof}
	Assume $I$ and $J$ are comparable in the partial order $\prec$ corresponding to the $r$-signotope $\sigma$ 
	with $I \prec J$, otherwise we are done.
	We show that after $n$ clockwise rotations, all signs of $\sigma$ are reversed. 
	Hence the partial order $\prec'$ corresponding to the (possible multiple times) rotated signotope $\sigma'$ is the reversed relation to $\prec$. 
	
	The sign of an $r$-subset $(z_1, \ldots, z_r)$ changes from $+$ to $-$ or vice versa if and only if the rotated element is contained in $(z_1, \ldots, z_r)$, i.e., if we rotate $z_1$.
	Hence after rotating $n$ times in total every $z_i$ was rotated and thus the sign of an $r$-subset changes exactly $r$ times. 
	Since $r$ is odd, the sign after rotating $n$ times is opposite. The obtained signotope $\sigma'$ is the reverse of the original signotope $\sigma$ and the corresponding partial order is also reversed.  
	
    Furthermore we cannot reverse the order of two disjoint $(r-1)$-sets in one rotation as shown in  Proposition~\ref{prop:rot_incomp}. 
	Hence there will be a moment where the two disjoint sets are incomparable.
\end{proof}

Although the following lemma is trivial in the setting of pseudoline arrangements, 
we need to prove it in the context of general $r$-signotopes. 
We show that the extension of a rotated signotope when rotated back does contain the original signotope. To show this we need to investigate the interaction between the rotation and deletion of elements. 

\begin{lemma}\label{lem:rotateback}
    Let $\sigma$ be an $r$-signotope on $[n]$ and $x \in [n]$. Then it is $\sigma_{\rot} \delete{n} = \sigma\delete{1}$ and  $\sigma_{\rot} \delete{x_{\rot}} = (\sigma\delete{x}) _{\rot}$ for $x \neq 1$.
\end{lemma}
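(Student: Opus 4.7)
The plan is to verify both identities pointwise on $r$-subsets of $[n-1]$. Throughout, for $k \in [n]$ and $Y \subseteq [n-1]$, I will write $Y^{(k)}$ for the unique $r$-subset of $[n]$ with $k \notin Y^{(k)}$ and $Y^{(k)}\delete{k} = Y$; explicitly, $Y^{(k)} = \{y \in Y : y < k\} \cup \{y+1 : y \in Y,\, y \geq k\}$, so that $\sigma\delete{k}(Y) = \sigma(Y^{(k)})$ by the definition of deletion. I also write $Y+1 = \{y+1 : y \in Y\}$.

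The first identity $\sigma_{\rot}\delete{n} = \sigma\delete{1}$ should fall out in one line. Since $n \notin Y$, the lift for the deletion at $n$ is trivial: $Y^{(n)} = Y$, so $\sigma_{\rot}\delete{n}(Y) = \sigma_{\rot}(Y)$. Because $\max Y \leq n-1$, the rotation rule (the non-sign-flipping case) gives $\sigma_{\rot}(Y) = \sigma(Y+1)$. On the other side, $Y^{(1)} = Y+1$, hence $\sigma\delete{1}(Y) = \sigma(Y+1)$ as well.

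For the second identity $\sigma_{\rot}\delete{x_{\rot}} = (\sigma\delete{x})_{\rot}$ with $x \neq 1$ (so $x_{\rot} = x-1$), I would split on whether $\max Y = n-1$. The key observation is that the lift $Y^{(x-1)}$ contains $n$ iff $n-1 \in Y$, because $x \leq n$ forces $n-1 \geq x-1$. In the case $\max Y \leq n-2$, neither rotation triggers its sign flip, and both sides reduce after unraveling to $\sigma$ applied to the same $r$-subset of $[n]$, namely $\{y+1 : y \in Y, y < x-1\} \cup \{y+2 : y \in Y, y \geq x-1\}$; this amounts to the elementary set identity $Y^{(x-1)} + 1 = (Y+1)^{(x)}$, i.e.\ ``lift-then-shift'' equals ``shift-then-lift''. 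In the case $\max Y = n-1$, both sides pick up a sign of $-1$ from the rotation rule — the left-hand side because $n \in Y^{(x-1)}$, the right-hand side because $\max Y = n-1$ triggers the second clause of the rotation definition on $[n-1]$ — and a parallel unpacking shows the two $r$-subsets to which $\sigma$ is applied both equal $\{1\} \cup \{y+1 : y \in Y, y < x-1\} \cup \{y+2 : y \in Y, x-1 \leq y \leq n-2\}$.

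The only real obstacle is the bookkeeping in the $\max Y = n-1$ case: on the left one first rotates and then deletes $x-1$, on the right one first deletes $x$ and then rotates, and one must confirm that the ``insertion of $1$'' produced by the rotation lands in the same slot regardless of the order. This comes down to the fact that $x \neq 1$ implies $1 < x$, so the new element $1$ sits to the left of $x$ in the deletion operation and therefore survives unshifted in both orderings. Once this is observed, the two sides visibly agree, and no deeper idea beyond careful unwinding of definitions is needed.
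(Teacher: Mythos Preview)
Your proof is correct and takes essentially the same route as the paper: a pointwise verification obtained by unwinding the definitions of rotation and deletion, with the case split governed by whether the maximal element triggers the sign flip. The only cosmetic difference is that the paper packages the index bookkeeping via the counterclockwise-rotation shorthand $\rot(-1)$ and asserts the commutation $((X^\ast)_{\rot(-1)})\delete{x} = X_{\rot(-1)}$ without spelling it out, whereas you verify the equivalent set identities (your ``lift-then-shift equals shift-then-lift'') explicitly.
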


\begin{proof}
    Because of the index shift it does not matter whether we delete the first element or we rotate $\sigma$ such that in the first element becomes the last and delete the last element in this rotated signotope. Hence the first part $\sigma_{\rot} \delete{n} = \sigma\delete{1}$ holds.
    
    Now assume $x \neq 1$ which implies $x_{\rot} \neq n$.
    Both mappings are $r$-signotopes on $[n-1]$.
    We need to check whether they map to the same signs.
    Let $X$ be an $r$-subset of $[n-1]$ and let $X^{\ast}$ be an $r$-subset of $[n]$ with $x_{\rot} \notin X^\ast$ and $X^{\ast} \delete{x_{\rot}} = X$. 
    We obtain  
    \begin{align*}
        \sigma_{\rot}\delete{x_{\rot}} (X) =
        \sigma_{\rot}\delete{x_{\rot}} (X^{\ast}\delete{x_{\rot}}) =
        \sigma_{\rot} (X^{\ast}).
    \end{align*}
    We will now rewrite the term to get the statement. 
    Recall that rotating an $r$-signotope on $n$ elements exactly $2n$ times results in the original signotope. Hence rotating $2n-1$ times corresponds to a counterclockwise rotation, i.e., the inverse operation of a clockwise rotation. 
    We denote this counterclockwise rotation by $\rot(-1)$.
    Since $x_{\rot} \notin X^\ast$, we have $x \notin \rotationback{(X^{\ast})}$.
    By definition it is 
    \begin{align*}
        \rotation{\sigma}(X^{\ast}) 
        = \varepsilon \cdot \sigma(\rotationback{(X^{\ast})}) 
        = \varepsilon \cdot \sigma \delete{x} ((\rotationback{(X^{\ast})}) \delete{x})
        =\varepsilon \cdot \sigma \delete{x} (\rotationback{X})
        = \rotation{(\sigma \delete{x})}  (X),
    \end{align*}
    where the sign $\varepsilon = +$ (resp.\ $\varepsilon = -$) if $n \notin X^\ast$ (resp.\ $n  \in X^\ast$).
    Note that $n \in X^\ast$ is equivalent to $1 \in \rotationback{X}$ for $x \neq 1$.
    This completes the proof of the lemma. 
\end{proof}

With
Proposition~\ref{prop:down-set}, Proposition~\ref{lem:rotate} and Lemma~\ref{lem:rotateback} we are now ready to prove Theorem~\ref{theorem:levi_odd}.

\subsection{Proof of Theorem~\ref{theorem:levi_odd}}
\label{thm:2extendableoddrank}

    Let $\sigma$ be an $r$-signotope on $[n]$ 
    and let $I,J$ be a pair of disjoint $(r-1)$-subsets.
    By Proposition~\ref{lem:rotate} there exists $k \in \{0,\ldots, n-1\}$
    such that 
    the $k$-fold rotated $(r-1)$-subsets $I_{\rot(k)}$, $J_{\rot(k)}$ are incomparable 
    in the $k$-fold rotated signotope $\sigma_{\rot(k)}$.
    
    To extend the signotope $\sigma_{\rot(k)}$, 
    we use the down-set $\calD$ consisting of $I_{\rot(k)}$, $J_{\rot(k)}$,
    and all $(r-1)$-subsets below. 
    In this down-set $I_{\rot(k)}$ and $J_{\rot(k)}$ are maximal elements since they are incomparable. 
    Hence we can apply Proposition~\ref{prop:down-set} in order to add a new element at position $n+1$ in the rotated signotope $\sigma_{\rot(k)}$ such that $I_{\rot(k)} \cup \{n+1\}$ and $J_{\rot(k)} \cup \{n+1\}$ are fliples.
    The extended signotope is denoted by $\sigma^\ast_{\rot(k)}$ and fulfills $\sigma^\ast_{\rot(k)} \delete{n+1} = \sigma_{\rot(k)}$.
    
    Finally, we need to find a rotation of $\sigma^\ast_{\rot(k)}$ which contains the original signotope~$\sigma$. 
    For this we perform $k+1$ counterclockwise rotations 
    (or equivalently, $2n+1-k$ clockwise rotations)
    and denote the so-obtained signotope by $\sigma^\ast$.
    Note that we perform $k+1$ counterclockwise rotations since the newly added element needs to be rotated and the $k$-fold clockwise rotation needs to be undone.
    After $k+1$ counterclockwise rotations, the added element $n+1$ in $\sigma^\ast_{\rot(k)}$ becomes the element $k+1$ in~$\sigma^\ast$. 
    It remains to show that $\sigma^\ast \delete{k+1} = \sigma$.
    
    After the first counterclockwise rotation, 
    the added element $n+1$ in $\sigma^\ast_{\rot(k)}$ becomes the first element~1 in~$(\sigma^\ast_{\rot(k)}){\rotationback{}}$. 
    By Lemma~\ref{lem:rotateback} it holds
    $((\sigma^\ast_{\rot(k)}){\rotationback{}})\delete{1} = 
    (\sigma^\ast_{\rot(k)})\delete{n+1} = 
    \sigma_{\rot(k)} $.
    After additional $k$ counterclockwise rotations, the added element $n+1$ in $\sigma^\ast_{\rot(k)}$ 
    becomes the element $k+1$ in~$\sigma^\ast$. 
    Furthermore $I \cup \{k+1 \} $ and $ J \cup \{k+1 \} $ are fliples of $\sigma^\ast$ by Lemma~\ref{lemma:fliple_rotation}. 
    Since we do not rotate the extending element, applying the second part of  Lemma~\ref{lem:rotateback} multiple times shows 
    $((\sigma^\ast_{\rot(k)})_{\rot(-1)})\delete{1} = (\sigma^\ast\delete{k+1})_{\rot(k)} $.
    Together with the previous equation
    this shows $\sigma_{\rot(k)} = (\sigma^\ast\delete{k+1})_{\rot(k)}$, which further implies $\sigma = \sigma^\ast\delete{k+1}$.
    Hence we obtain the signotope $\sigma$ when deleting $k+1$ from $\sigma^\ast$.
    This completes the proof of Theorem~\ref{theorem:levi_odd}.

\subsection{Proof of Corollary~\ref{corollary:nonemptyintersection}}
\label{sec:nonemptyintersection}

To prove Corollary~\ref{corollary:nonemptyintersection}, 
we proceed similar as in the proof of Theorem~\ref{theorem:levi_odd}.
By Proposition~\ref{prop:down-set}, it suffices to show that after some rotations the  $(r-1)$-subsets corresponding to $I$ and $J$ are incomparable.

    Let $s = |I \cap J |$. Since Theorem~\ref{theorem:levi_odd} covers the case $s=0$, we may assume $s \ge 1$.
    We consider the following two cases. 
    
    First, assume that $r$ is odd and $s$ is even.
    For odd rank $r$, we have already seen that after $n$ rotations, the signotope is reversed and hence the corresponding partial order is reversed. 
    For even $s$, the relation between $I$ and $J$ is reversed $s$ times (whenever we rotate one element $x \in I \cap J$). 
    These are the only $s$ times where we reverse the order in one single rotation. 
    Since $s$ is even and the order is reversed after $n$ rotations, the corresponding $(r-1)$-subsets must be incomparable in between.
    
    Second, assume that $r$ is even and $s$ is odd.
    For even rank $r$, the $n$-fold rotation leaves the signotope unchanged
    and hence also the partial orders are the same. 
    Since $s$ is odd, we reverse the orientation of $I$ and $J$ exactly $s$ times in a single rotation step. 
    Hence they must be incomparable in between. 
    
The statement now follows from Proposition~\ref{prop:down-set} and Lemma~\ref{lem:rotateback} similar as in the proof of Theorem~\ref{thm:2extendableoddrank}.
This completes the proof of Corollary~\ref{corollary:nonemptyintersection}.

\section{Examples in even rank: SAT attack and properties}
\label{sec:computerassisted}

Since the proof for the extension theorem (Theorem~\ref{theorem:levi_odd})
applies only for odd ranks,
we had to investigate even ranks in a different manner.
For rank~4, we used computer assistance to enumerate
all signotopes and then tested
2-extendability for each signotope.
On 6 and 7 elements all 4-signotopes are 2-extendable. 
On 8 elements we found non-2-extendable 4-signotopes. 
For both, the enumeration and the 2-extendability test,
we modeled SAT instances which were then solved using 
the python interfaces \verb|pycosat| \cite{pycosat}  and  \verb|pysat| \cite{pysat} to run the
SAT solver \verb|picosat|, version 965, \cite{Biere08} and \verb|cadical|, version~1.0.3 \cite{Biere2019}, respectively.

Using this two-level-SAT approach we managed to find
the first examples of 4-signotopes which are not 2-extendable.
In order to keep symmetries and similarities of our nicely structured example of rank $4$, we restricted our search space to examples in rank~$r$ on $2r$ elements. 
While for rank 4 all signotopes on 8 elements can be enumerated within a few seconds,
the complete enumeration 
in higher ranks is unpractical
as the number of $r$-signotopes on $2r$ elements grows faster than doubly exponential in~$r$ (cf.\ Proposition~\ref{prop:many_signotopes}). 
Hence, to be able to approach higher ranks, 
we further analyzed the structure of our non-2-extendable rank~4 examples together with an analyze of the already found rank~6 examples. 
These made it possible to find a recursive construction.
See Section~\ref{ssec:structure} for more details.

With the observed properties as additional constraints,
we further restricted the search space
so that only ``reasonable'' candidates were enumerated. 
Under these restrictions, we managed to find examples for rank 6, 8, 10, and 12 which are not 2-extendable.

\subsection{SAT model for enumeration}
\label{ssec:satenum}

To encode $r$-signotope on $n$ elements,
we proceed as following.
We use Boolean variables $S_{X}$ for every $X \in \binom{[n]}{r}$
to indicate whether $\sigma(X)=+$.
To ensure that these variables model a valid signotope,
we add constraints which ensure that for every $r$-packet $Y = \{y_1,\ldots,y_{r+1}\} \in \binom{[n]}{r+1}$
there is at most one sign-change in the sequence 
\[
\sigma(Y_{1}),
\ldots,
\sigma(Y_{r+1}).
\]

More precisely, 
since there are exactly $2r+2$ possible assignment of this sequence,
we introduce
auxiliary variables $T_{Y,t}$ for $ t \in \{1,\ldots,2r+2\}$
to indicate which of the assignments applies.\footnote{
Alternatively one can assert 
$
\neg S_{Y_i} \vee
S_{Y_j} \vee
\neg S_{Y_k}
$
and
$
S_{Y_i} \vee
\neg S_{Y_j} \vee
S_{Y_k}
$ 
for every $Y$ and $ 1 \le i<j<k \le r+1$.
Even though this approach does not require auxiliary variables to indicate the types of $(r+1)$-tuples, we need these auxiliary variables to assign the variables for fliples later anyhow.
}

Next 
we introduce auxiliary variables
$F_{X,Y}$ for every $r$-packet $Y \in \binom{[n]}{r+1}$ and every $r$-tuple $X \in \binom{Y}{r}$
to indicate whether $X$ is a fliple when $\sigma$ is restricted to~$Y$. This is done in a similar fashion as for the $S_X$ variables.
Using the $F_{X,Y}$ variables, 
we can assert the variables 
$
F_X = \bigvee_{Y \in \binom{[n]}{r+1} \colon X \subset Y} F_{X,Y}
$
for every $X \in \binom{[n]}{r}$
to indicate whether $X$ forms a fliple.
Last but not least, 
we introduce variables
$L_{X,k}$
to indicate whether $X$ is the $k$-th fliple. 
This will allow us to enumerate only  configurations with a prescribed number of fliples.

\subsection{SAT model for testing 2-extendability}
\label{ssec:satext}

We are now ready to 
formulate a SAT instance to decide whether
a given signotope $\sigma$ on $[n]$ 
and given disjoint $(r-1)$-tuples $I,J$
can be extended by an additional element $n+1$
such that $I \cup \{n+1\}$ and $J \cup \{n+1\}$
are fliples in the extension~$\sigma^*$.
This is sufficient to test extendability since whenever there is an extension, there is a rotation such that the signotope is extendable by an element at the last position. 
As in Section~\ref{ssec:satenum},
we create a SAT instance 
to find an $(n+1)$-element signotope
but we add constraints to fix $\sigma$
and to assert that  $I \cup \{n+1\}$ and $J \cup \{n+1\}$ are fliples in $\sigma^*$.

For a given signotope $\sigma$ on elements $[n]$ 
we can now iterate over all disjoint $(r-1)$-tuples $I,J$
and test whether there is a rotation
of $\sigma$ and $I,J$ 
such that in the extension $\sigma^*$ by the element $n+1$
the $r$-tuples $I \cup \{n+1\}$ and $J \cup \{n+1\}$ are fliples.
If for some $I,J$ no such rotations exists,
we have certified that $\sigma$ is not 2-extendable.

\subsection{Structure of the examples supporting Conjecture \ref{conj:levi_even}}
\label{ssec:structure}

In order to find the first witnessing examples for \cref{conj:levi_even} in rank $4$, we used the two-step SAT approach as described in Sections~\ref{ssec:satenum} and \ref{ssec:satext}.
To make investigations in higher ranks,
we had to get a better understanding of the examples found in rank~4.
Hence we filtered those with regularities and symmetries to come up with a generalization of the observed properties and analyzed their structure.
Our aim was to find a relation between examples in different ranks, for example using projection and deletion arguments.
For this we investigated the structure of our rank 4 examples together with some already found rank 6 examples.

One of the first and crucial observations was that
there exist signotopes such that 
for every choice of even indices $I \subset E_r \mathop{:=} \{2,4,\ldots, 2r\}$ and every choice of odd indices $J \subset O_r \mathop{:=} \{1,3,\ldots, 2r-1\}$ there is no such extension.
In fact, for such examples
it is sufficient to check $I=\{2,4,\ldots, 2r-2\}$ and $J=\{1,3,\ldots, 2r-3\}$ to verify the non-2-extendability.
This observation not only allowed us to restrict the search space, but also to speed up the extendability-test by a factor of $\Theta(r^2)$ 
since not all pairs of $(r-1)$-tuples $I,J$ need to be tested.

While we came up with further observations one by one over the time, 
we here give a summary of all the properties,
which we desire from the examples in rank $r$ with $n=2r$ elements.
In the following we denote by $X = (x_1, x_2, \ldots, x_r)$ an $r$-tuple and use the notation
$(-)^i = +$ if $i$ is even and $(-)^i = -$ if $i$ is odd.

\begin{enumerate}[(a),  labelsep = 1em, leftmargin = 4em]
    \item $\sigma = \sigma_{\rot(4)}$, where $\sigma_{\rot(4)}$ is obtained by the 4-fold rotation of~$\sigma$.
    
    \item $\sigma(2,4,\ldots, 2r) = -$ and $\sigma(1, 3, \ldots, 2r-1) = +$.
    
    \item If there is only one even or only one odd element in~$X$, then the sign $\sigma(X)$ depends only on the position of that element in~$X$.
    More specifically: If $e = x_i$ is the only even element in~$X$, then 
    $
        \sigma(X) = (-)^i. 
    $
    If $o=x_i$ is the only odd element in~$X$, then it is
    $
        \sigma(X) = (-)^{i+1}. 
    $
    
    \item If $x_1, \ldots, x_i \in E_r$ and $x_{i+1}, \ldots, x_r \in O_r$ with $2 \leq i \leq r-2$, then the sign is 
    $
        \sigma(X) = (-)^{i+1}. 
    $
    
    \item 
    Let $x_1, \ldots, x_i \in O_r$ and $x_{i+1}, \ldots, x_r \in E_r$ for $2 \leq i < r-2$.
    If $x_r < 2r$,
    then $\sigma(X) = -.$
    If $x_{j} = 2j$ for all $j = i+1, \ldots, r$, 
    then $\sigma(X) = +.$    

\end{enumerate}

\noindent
Furthermore, we fix the following set of 8 fliples for rank 4. 
\begin{align*}
    F_4 = \{ &(1, 3, 5, 7), (2, 4, 6, 8 ), (2, 3, 7, 8), (1, 3, 4, 8), \\
    &(1, 2, 4, 7),(3, 5, 6, 8), ( 4, 5, 7, 8),
(3, 4, 6, 7)
\}
\end{align*}
Together with the 4-fold symmetry it is sufficient to mention only some of them:
\begin{align*}
    \widehat{F}_4 = \{(1, 3, 5, 7), (2, 4, 6, 8), (4, 5, 7, 8), (3, 4, 6, 7), (1, 2, 4, 7)\}
\end{align*}

\noindent
In rank 4, there are only four signs which are not determined by the above properties:
    \begin{align*}
        (1,3,4,8), \qquad (4,5,7,8), \qquad (2,3,7,8),
        \qquad (3,4,6,7)
    \end{align*}
    
\noindent
By the 4-fold symmetry,
the assignment of $(1,3,4,8)$ also determines the sign of $(4,5,7,8)$, and vice versa. 
The third and fourth tuple have a similar interaction.
Hence, there are precisely 4 signotopes in rank~4 which fulfill the above properties.
We fix one of the four configurations 
(the choice does not play a role)
and refer to it as $\sigma_4$ in the following.

\medskip
In order to find examples in higher ranks, we use the following property.
\begin{enumerate}[(a), labelsep = 1em, leftmargin = 4em]
\setcounter{enumi}{5}
    \item Let $\sigma_{r-2}$ be an example of rank $r-2$ on $2r-4$ elements. 
    For an $r$-tuple $X \subseteq [2r]$
    with $1,3 \notin X$ and $2,4 \in X$,
    we define the sign 
    \begin{align*}
        \sigma_r ( X) = \sigma_{r-2}(X\delete{1,2,3,4}),
    \end{align*}
    where $X\delete{1,2,3,4} = (((X\delete{4})\delete{3})\delete{2})\delete{1}$ denotes the $(r-2)$ tuple on $[2r-4]$.
    Note that $X\delete{1,2,3,4}$ is obtained by deleting the elements $2$ and $4$ from~$X$ and a further index shift by $-2$  caused by deleting\footnote{Inspired by the language of oriented matroids, such an operation might be called \emph{contraction}.} $1$ and~$3$, which are not contained in $X$. 
\end{enumerate}

Altogether,
if we start 
with one example from rank~4 
and recursively construct examples in higher ranks   
with the desired properties and further prescribe $(r/2)^2+(r/2)+2$ fliples for rank~$r$, 
it finally turned out that there is a unique example in each of the ranks $r=6,8,10,12$. 
All examples and the source code to verify their correctness are available as supplemental data~\cite{supplemental_data}.

In the future we hope   
to find an argument for the non-2-extendability based on the described properties
and construct an infinite family of examples.
Even though we conjecture that there is an infinite family,
we want to clarify that we found examples in rank~4 and~6 which do not have the above properties and hence the assumptions might also be too strong.

\section{Theorem~\ref{theorem:levi_odd} implies Levi's extension lemma (Theorem~\ref{theorem:levi})}
\label{sec:no_restriction}

\begin{figure}[tb]

	\centering
	\includegraphics{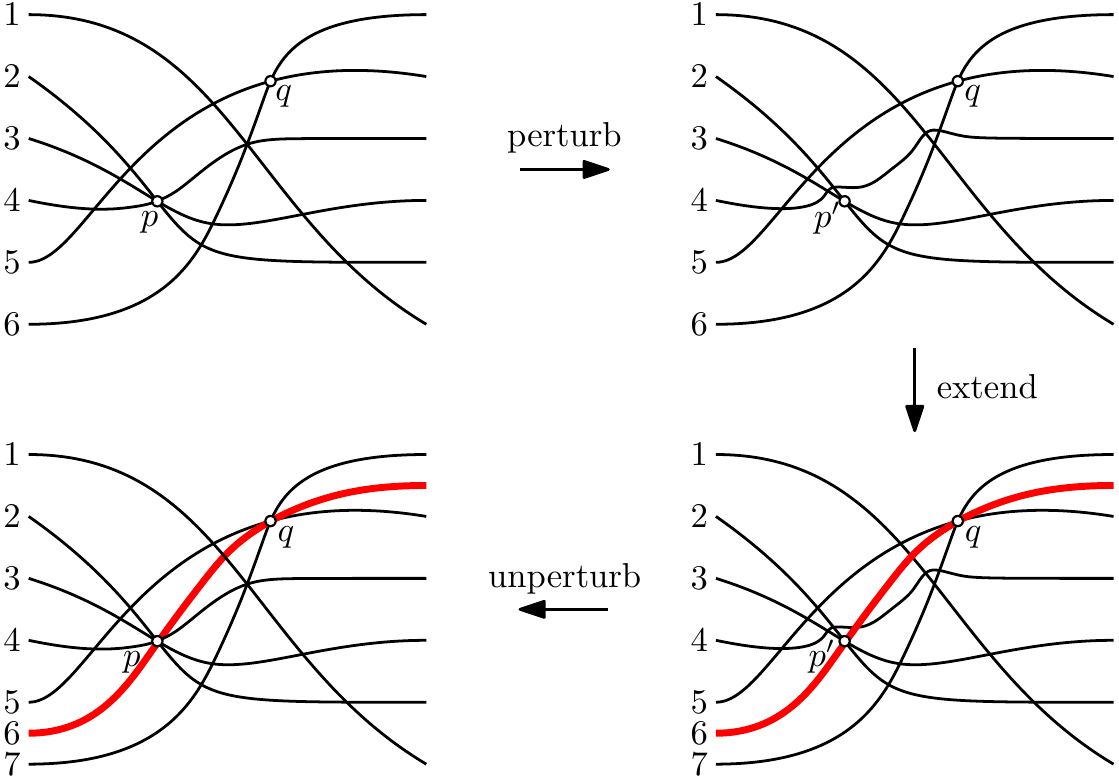}
		
	\caption{An illustration how Theorem~\ref{theorem:levi_odd} implies Levi's extension lemma (Theorem~\ref{theorem:levi}).
	When perturbing the top-left arrangement, 
	the multi-crossing point $p$ (the intersection of $2$, $3$, and $4$)
	is split into three simple crossing points, 
	including the point $p'$ (the intersection of $2$ and $3$).
	After the extension,
	we again contract these three crossing points to one multi-crossing point.
	}
	\label{fig:levi_via_signotopes} 
\end{figure}

As outlined in Section~\ref{sec:reformulation},
it is sufficient to prove 
Levi's extension lemma for 
simple arrangements of pseudolines 
and for crossing points as prescribed points.
Given a non-simple arrangement, 
we can slightly perturb the multiple crossing points 
(as depicted in Figure~\ref{fig:perturbation})
to obtain a simple arrangement. We obtain simplicial cells instead of the multiple crossings. 
This simple arrangement can then be extended, 
and each of the multiple crossing points of the original arrangement 
can again be obtained by contracting the simplicial cells to a point.
Whenever a prescribed point lies on a pseudosegment  
or inside a cell,
we can extend the arrangement through an adjacent crossing.
By perturbing the extending pseudoline, 
we can ensure that the pseudoline passes through the originally prescribed point.

{
	\small
	\bibliography{references}
}

\ifthenelse{\not\boolean{appendix}} 
{}
{\newpage
\appendix 

\section{Asymptotic number of signotopes}
\label{app:proof_nosignotopes}

In this section, we give a short proof for Proposition~\ref{prop:many_signotopes}.

\propmanysignotopes*

\subsection{Proof of the upper bound}

For the upper bound we could just use the fact, 
that $r$-signotopes on $n$ elements are rank~$r$ oriented matroids
and their number is upper bounded by $2^{c(n^{r-1})}$ \cite[Chapter~7.4]{BjoenerLVWSZ1993}. For completeness, however, we include the inductive proof.

For rank~$3$, 
there exists a constant $c>0$ 
such that for every $n$ 
there are at most $2^{cn^2(1+o(1))}$ signotopes on $n$ elements. 
The currently best bound $c=0.657$ is provided in \cite{FelsnerValtr2011}.

For rank~$r\ge4$, we proceed by induction.
Given an $r$-signotope $\sigma$ on $[n]$, 
we compute its projections.
For each $i \in [n]$, we project $\sigma$ to $i$ and obtain an $(r-1)$-signotope $\sigma/_{i}$ on $n-1$ elements. Formally, $\sigma/_{i}$ is defined by 
$\sigma/_{i}(J\delete{i}) \mathop{:=} \sigma(J)$ for every $r$-subset $J$ with $i \in J$.
Since two distinct $r$-signotopes yield different sequences $(\sigma/_{i})_{i \in [n]}$ of projections, 
we can bound the number of $r$-signotopes as 
\[
f_r(n) \le (f_{r-1}(n-1))^n \le \left(2^{c(n-1)^{r-2}} \right)^n \le 2^{cn^{r-1}}  ,
\]
where $f_{r-1}(n-1)$ denotes the number of $(r-1)$-signotopes on $n-1$ elements.

\subsection{Proof of the lower bound}

For convenience we assume  $n = rm$ for some $m \in \mathbb{N}$.
We partition $[n] = \bigcup_{k=1}^r N_k$ with intervals $N_k = [(k-1)m+1,km]$ of size $m$.

On $r$-subets we define the weight $\phi(x_1,\ldots,x_r) = \sum_{k=1}^{r-1} x_k  - x_r$.
Note that,
for $r$-packets $X=(x_1,\ldots,x_{r+1})$ 
with $x_1 < \ldots < x_{r+1}$ as usual,
it holds 
$\phi(X_1) > \ldots > \phi(X_{r})$ and
$\phi(X_r) < \phi(X_{r+1})$.

For a threshold $T$ we now define a collection $\SS_T$ of signotopes on $[n]$. 
A signotope $\sigma$ is in~$\SS_T$ if $\sigma$ has signs as follows, where $\pm$ indicates that the sign can be chosen arbitrarily.
\[
 \sigma(x_1,\ldots,x_r) 
  = \begin{cases}
	- & \text{if $x_{r-1} \not \in N_r$, $x_r \in N_r$, and } \phi(x_1,\ldots,x_r) > T \\
	\pm & \text{if $x_{r-1} \not \in N_r$, $x_r \in N_r$, and } \phi(x_1,\ldots,x_r) = T \\
	+ & \text{otherwise}.
\end{cases}  
\]

For the lower bound we show two properties:
\begin{itemize}
\item The elements of  $\SS_T$ are indeed signotopes.
\item For fixed $r$ and a suitable chosen $T$ there are $2^{\Omega(n^{r-1})}$ elements in $\SS_T$.
\end{itemize}

Let us now check the monotonicity of all $r$-packets $X=(x_1,\ldots,x_{r+1})$,
that is, there is at most one sign-change in the sequence 
\[
\sigma(X_1),\ldots,\sigma(X_{r+1}).
\]
If $x_{r+1} \not \in N_r$, then $\sigma(X_k) = +$ for all $k=1,\ldots,r+1$ 
and there is no sign change on the packet. 
Otherwise, there is some $k \in [r+1]$ such that $x_1,\ldots,x_{k-1} \not \in N_r$ and $x_k,\ldots,x_{r+1} \in N_r$.

If $k<r$, then $x_{r-1},x_r,x_{r+1} \in N_r$. Hence each $X_j$ contains at least two elements from~$N_r$ whence
$\sigma(X_j) = +$ for all $j$ and there is no sign change on the packet.

If $k=r$, then each  $X_j$ with $j < r$ contains two elements from $N_r$ and thus $\sigma(X_j)=+$ for $j < r$.
We also know that $\phi(X_r) < \phi(X_{r+1})$, therefore, if $\sigma(X_r)=-$ then $\sigma(X_{r+1})=-$ as well.
Hence, there is at most one sign change on the packet.

Finally, if $k=r+1$, then $\sigma(X_{r+1})=+$ and $\phi(X_1) > \phi(X_2) > \ldots  > \phi(X_{r})$
whence $\sigma(X_1),\sigma(X_2), \ldots,\sigma(X_{r})$ is 
a sequence of $-$ signs followed by a sequence of $+$ signs possibly one $\pm$ in between.
Again there is at most one sign change on the packet.

This completes the proof that all elements of $\SS_T$ are signotopes.

It remains to show that for some $T$ the set $\SS_T$ contains sufficiently many elements.

Call an $r$-tuple $(x_1,\ldots,x_r)$ \emph{splitted} if $x_k \in N_k$ for
$k=1,\ldots,r$. Splitted $r$-tuples with $\phi(x_1,\ldots,x_r) = T$ are
tuples where elements of $\SS_k$ can freely and independently choose the sign from $+$ and $-$.
Hence, if the number of these tuples is $a_T$
then $\displaystyle |\SS_T| \geq 2^{a_T}$.

Consider the equation
\begin{equation}
	\label{eqn:sum_equal}
	\sum_{k=1}^{r-1} \left(x_k - (k-1)m \right) = x_r - (r-1)m.
\end{equation}
and note that this is equivalent to
\begin{equation}
	\sum_{k=1}^{r-1} x_k - x_r = m \frac{(r-4)(r-1)}{2} = T.
\end{equation}
If we define $ T= \frac{(r-4)(r-1)}{2}$
and
$y_k = x_k - (k-1)m$, 
then we have $1\leq y_k \leq m$ because
$x_k \in N_k$ and $\sum_{k=1}^{r-1} y_k  = y_r$ by equation~\eqref{eqn:sum_equal}.
Clearly such $y$ vectors and splitted $r$-tuples with $\phi(x_1,\ldots,x_r) = T$ are
in bijection. Now 
vectors $(z_1,\ldots,z_{r-1})$ with $1\leq z_1 <\ldots < z_{r-1} \leq m$
are in bijection with the $y$ vectors via $z_k = \sum_{j \leq k} y_j$.
The number of $z$ vectors is just the number of $(r-1)$-subsets of $[m]$.

Hence, for $T= m \frac{(r-4)(r-1)}{2}$ the number of
splitted $r$-tuples with $\phi(x_1,\ldots,x_r) = T$ is
\[
a_T= \binom{m}{r-1} =
\Theta(\frac{1}{r!}\left(\frac{n-r}{r}\right)^{r-1}) = \Theta(n^{r-1}).
\]
This completes the proof of Proposition~\ref{prop:many_signotopes}.

\section{Properties of the clockwise rotation}
\label{sec:properties}

In this section, we prove some properties of the rotation of a signotope which 
play a central role in the proof of the extension theorem (Theorem~\ref{theorem:levi_odd}).

\rotissignotope*

\begin{proof}
	Consider an $r$-packet 
	$X' = (x_1',\ldots,x_{r+1}')$.
	Since rotation is a bijection on the $r$-packets of $[n]$
	there is an $X = (x_1, \ldots, x_{r+1})$ such that $X' = X_{\rot}$.
	
	If the rotated element is not in~$X$, 
	i.e., $x_{r+1}' < n$, then $x_i = x_i'+1$ for all $i = 1, \ldots r+1$
	and the signs of the $r$-tuples of packet $X'$ have to be considered in the same order:
	\[
	\sigma_{\rot}(X'_1), \sigma_{\rot}(X'_2), 
   \ldots, \sigma_{\rot}(X'_r), \sigma_{\rot}(X'_{r+1}) =
   	\sigma(X_1), \sigma(X_2), 
   \ldots, \sigma(X_{r}), \sigma(X_{r+1})
   \]
    the latter has at most one sign change since $\sigma$ is an $r$-signotope.
	
	If the rotated element is in~$X$, 
	that is, $x_{r+1}'=n$ and $x_1 = 1$, then we have
	$x_{i+1} = x_i' +1$ for all $i = 1, \ldots, r$.
	Note that $n \in X'_i$ for $i = 1, \ldots, r$ and hence $1 \in X_j$  for $j = 2, \ldots, r+1$.
	The ordered sequence of signs given by $X'$ is
	\begin{align*}
		&\sigma_{\rot}(X'_1) &&\sigma_{\rot}(X'_2), 
		&& \ldots, 
		&&\sigma_{\rot}(X'_{r}),
		&&\sigma_{\rot}(X'_{r+1}) \\
		= \ 
		&\sigma_{\rot}(X'\backslash \{x_2-1\}), &&\sigma_{\rot}(X'\backslash \{x_3-1\}), 
		&& \ldots, 
		&&\sigma_{\rot}(X'\backslash \{x_{r+1}-1\}),
		&&\sigma_{\rot}(X' \backslash \{n\}) \\
		= \ 
		&-\sigma(X_2), 
		&&-\sigma(X_3), 
		&&\ldots , 
		&&-\sigma(X_{r+1}),
		&&\sigma(X\backslash\{1\}) \\
		= \ 
		&-\sigma(X_2), 
		&&-\sigma(X_3), 
		&&\ldots , 
		&&-\sigma(X_{r+1}),
		&&\sigma(X_1) 
	\end{align*}
	which has at most one sign change because
	$\sigma(X_1), \sigma(X_2), 
   \ldots, \sigma(X_{r}), \sigma(X_{r+1})$
	has at most one sign change due to the signotope property of $\sigma$.
\end{proof}

The following lemma shows that the rotated signotope $\sigma_{\rot}$ has essentially the same properties as $\sigma$ when it comes to fliples. We need to handle only the index shift.  

\rotfliples*
\begin{proof}
	To prove that an $r$-subset $F_{\rot}$ is a fliple, we need to check all  $r$-packets~$X'$ with~$F_{\rot} \subset X'$ as shown in the previous proof. Let $X'$ be such a packet and let $X$ be such that
	$X_{\rot} = X'$.
	Since $F$ is a fliple in $\sigma$ we know that if we change the sign of $\sigma(F)$ there is still at most one sign change in the sequence $\sigma(X_1), \sigma(X_2), 
   \ldots, \sigma(X_{r}), \sigma(X_{r+1})$, we abreviate this by saying that $F$ is \emph{flipable} in $X$. 
   
   If $1\not\in X$, then $\sigma(X_i) = \sigma_{\rot}(X'_i)$  for all $i$. Moreover if $j$ is such that $F=X_j$ then $F_{\rot} = X'_j$, hence, $F_{\rot}$ is flipable in $X'$.
   
   Otherwise we have $1\in X$. Then as shown in the proof of Lemma~\ref{lem:rot_is_signotope} it is
   \[
	\sigma_{\rot}(X'_1), \sigma_{\rot}(X'_2), 
   \ldots, \sigma_{\rot}(X'_r), \sigma_{\rot}(X'_{r+1}) =
   	-\sigma(X_2), -\sigma(X_3), 
   \ldots, - \sigma(X_{r+1}) , \sigma(X_{1}).
   \]
   We know that $F$ is flipable in $X$. If $F=X_i$ with 
   $3\leq i\leq r$, then $F_{\rot} = X'_{i-1}$ is clearly flipable in $X'$. If $F=X_1$ the sequence $\sigma(X_2), 
   \ldots,\sigma(X_{r+1})$ is constant. This implies that   
   the sign of $\sigma_{\rot}(F_{\rot}) = \sigma_{\rot}(X'_{r+1})$ can be fliped. 
   If $F=X_2$ then $\sigma(X_1) \neq \sigma(X_3)$ 
   and the signs $\sigma_{\rot}(X'_i)$ for $2\leq i \leq r+1$ are the same. Hence $F_{\rot} = X'_1$ is flipable in $X'$.
   If $F=X_{r+1}$ the sequence $\sigma(X_1),\ldots,\sigma(X_{r})$ is constant, whence the sign of $\sigma_{\rot}(F_{\rot}) = \sigma_{\rot}(X'_{r})$ is adjacent to different signs and can thus be fliped.
   
This shows that $F_{\rot}$ is flipable in all packets containing it and hence a fliple. 
\end{proof}

\begin{lemma}\label{lem:rotint}
	Let $\sigma$ be an $r$-signotope with partial order $\prec$  and $\sigma_{\rot}$ the rotated signotope with corresponding partial order $\prec_{\rot}$. 
	For two $(r-1)$-subsets $I,J$ with an intersection $|I \cap J| = r-2$ and $I \prec J$ it holds 
		\begin{align*}
			I_{\rot} \prec_{\rot} J_{\rot} \quad  & \text{ if } 1 \notin I \cap J, \quad \text{and} \\
			I_{\rot} \succ_{\rot} J_{\rot} \quad  & \text{ if } 1 \in I\cap J .
		\end{align*}
\end{lemma}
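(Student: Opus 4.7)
The key observation is that when $|I\cap J|=r-2$, the union $X \mathrel{:=} I\cup J$ is an $r$-subset of $[n]$. Thus $I$ and $J$ are both $(r-1)$-subsets obtained by removing one element from $X$, so there are indices $p,q$ with $I = X_p$ and $J = X_q$ (with respect to the sorted listing $X=(x_1,\ldots,x_r)$). The definition of $\prec$ then gives a direct translation: $I\prec J$ is equivalent to a condition on $\sigma(X)$ together with the relative positions $p,q$. The plan is therefore to reduce the lemma to tracking what the clockwise rotation does to (i) the sign $\sigma(X)$, and (ii) the indexing of the missing elements of $X$ inside $X_{\rot}$.

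First I would unpack the equivalence using the rule that $\sigma(X)=+$ (resp.\ $-$) makes $X_1\succ\cdots\succ X_r$ (resp.\ $X_1\prec\cdots\prec X_r$). Assuming (after relabeling if necessary) that $p<q$, one gets: $I\prec J$ iff $\sigma(X)=-$ when $p<q$, or iff $\sigma(X)=+$ when $p>q$. Next I would apply the definition of $\sigma_{\rot}$: when $1\notin X$ one has $\sigma_{\rot}(X_{\rot})=\sigma(X)$, and when $1\in X$ one has $\sigma_{\rot}(X_{\rot})=-\sigma(X)$ with $n$ appearing as the last element of $X_{\rot}$. In particular, if $1=x_1\in X$, then the position of $x_i-1$ in $X_{\rot}$ is $i-1$ for $i\ge 2$, while $n$ takes position $r$.

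Case 1: $1\notin I\cap J$. If $1\notin X$, the rotation is just a global index shift, so both the sign and the positions of $p,q$ are preserved; the conclusion is immediate. If $1\in X$, then $1$ belongs to exactly one of $I,J$; say $1\in I$, so $J=X_1$ and $I=X_q$ with $q\ge 2$. Tracking through the rotation gives $J_{\rot}=(X_{\rot})_r$ and $I_{\rot}=(X_{\rot})_{q-1}$ with $q-1<r$. Since $\sigma_{\rot}(X_{\rot})=-\sigma(X)$, the assumption $I\prec J$ (which here forces $\sigma(X)=+$) yields $\sigma_{\rot}(X_{\rot})=-$, so $(X_{\rot})_1\prec_{\rot}\cdots\prec_{\rot}(X_{\rot})_r$ and thus $I_{\rot}\prec_{\rot}J_{\rot}$. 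The symmetric subcase $1\in J$ is handled identically. This establishes the first stated implication.

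Case 2: $1\in I\cap J$. Then $1\in X$ and both $I=X_p$, $J=X_q$ have $p,q\ge 2$; say $p<q$. The rotation gives $I_{\rot}=(X_{\rot})_{p-1}$ and $J_{\rot}=(X_{\rot})_{q-1}$ with $p-1<q-1$. The hypothesis $I\prec J$ forces $\sigma(X)=-$, hence $\sigma_{\rot}(X_{\rot})=+$, which makes the chain in $X_{\rot}$ decreasing in $\prec_{\rot}$, and therefore $I_{\rot}\succ_{\rot}J_{\rot}$. This gives the second implication. The proof is routine modulo this bookkeeping; the only real obstacle is being careful and consistent about the order-reversal coming from the sign flip in $\sigma_{\rot}$ versus the index shift that moves the element $1$ to the last position of $X_{\rot}$, which is exactly what produces the asymmetry in the two cases.
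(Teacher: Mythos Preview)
Your argument is correct and follows essentially the same approach as the paper: both proofs set $X=I\cup J$, split into the three cases $1\notin X$, $1\in X\setminus(I\cap J)$, and $1\in I\cap J$, and then track how rotation affects the sign $\sigma(X)$ versus the relative positions of the two missing elements inside $X_{\rot}$. The paper phrases the position bookkeeping via lexicographic order while you use explicit indices $p,q$, but the content is identical; the only cosmetic issue is that in Case~2 your ``say $p<q$'' is not truly without loss of generality (since $I\prec J$ distinguishes them), though the omitted subcase $p>q$ works by the same computation with the signs swapped.
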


\begin{proof}
	If $1 \notin I$ and $1 \notin J$, then $1 \notin I \cup J$ and the sign of $ I \cup J$ is the same for $\sigma$ and $\sigma_{\rot}$, i.e. $\sigma(I \cup J) = \sigma_{\rot} ( I _{\rot} \cup J_{\rot})$.
	Furthermore the order of $I$ and $J$ in the $(r-1)$-packet $I \cup J$ is the same as the order of $I_{\rot}$ and $J_{\rot}$ in the $(r-1)$ packet $I_{\rot} \cup J_{\rot}$. 
    Hence if $I$ is lexicographically larger than $J$, then $I_{\rot}$ is lexicographically larger than $J_{\rot}$.
    Hence $I_{\rot} \prec_{\rot} J_{\rot}$.
	
	If $1 \in I$ but $1 \notin J$ the sign of the $I$ is after the sign of $J$ in the sign sequence corresponding to the $(r-1)$-packet $I \cup J$ which corresponds to the reversed lexicographic order. 
	By assumption it is $J \succ I$ and thus $\sigma(I \cup J) = +$.
	After rotating clockwise, the sign of $I_{\rot}$ is before the sign of $J_{\rot}$ in the $(r-1)$-packet $I_{\rot} \cup J_{\rot}$.
	since $n \in I_{\rot}$ and $n \notin J_{\rot}$.
	Furthermore the sign of the $r$-subset changes, i.e., $\sigma(I \cup J) = - \sigma_{\rot}(I_{\rot} \cup J_{\rot}) = -$. 
	This shows the relation stays the same, i.e., $I_{\rot} \prec_{\rot} J_{\rot}$.
	The case $1 \in J$ but $1 \notin I$ works analogously.
	
	If $1 \in I$ and $1 \in J$ the order of the appearance of $I$ and $J$ in the $(r-1)$-packet $I \cup J$ is the same as the order of $I_{\rot}$ and $J_{\rot}$ in $I_{\rot} \cup J_{\rot}$ 
	but the sign of the $r$-tuple gets is reversed, i.e., $\sigma(I \cup J) = - \sigma_{\rot}(I_{\rot} \cup J_{\rot})$. 
	Thus the order between $I_{\rot}$ and $J_{\rot}$ is reversed as claimed. 
\end{proof}

A central role in the proof of Theorem~\ref{theorem:levi_odd} is the relation after rotation for two arbitrary $(r-1)$-subsets.
We show that the order of two disjoint elements cannot be reversed with a single rotations. 

For the proof of Proposition~\ref{prop:rot_incomp},
we introduce the following two partitions.
With respect to the first element~$1$, we partition 
the $(r-1)$-subsets 
$\binom{[n]}{r-1}$ into the following three sets:
\begin{align*}
    \calH^{\sigma}_1 &= \{\  I \subset [n] : |I| = r-1, \ 1 \in I \ \} \\
    \calU^{\sigma}_1 &= \{\  I \subset [n] : |I| = r-1, \ 1 \notin I, \ \sigma(I \cup \{1\}) = + \ \} \\
    \calD^{\sigma}_1 &= \{\  I \subset [n] : |I| = r-1, \ 1 \notin I, \ \sigma(I \cup \{1\}) = - \ \} .
\end{align*}
Similarly,
with respect to the last element~$n$, we partition 
$\binom{[n]}{r-1}$ into the following three sets:
\begin{align*}
    \calH^{\sigma}_n &= \{\  I \subset [n] : |I| = r-1, \ n \in I \ \} \\
    \calU^{\sigma}_n &= \{\  I \subset [n] : |I| = r-1, \ n \notin I, \ \sigma(I \cup \{n\}) = - \ \} \\
    \calD^{\sigma}_n &= \{\  I \subset [n] : |I| = r-1, \ n \notin I, \ \sigma(I \cup \{n\}) = + \ \} .
\end{align*}
Note the sign change in the definition, that is, every $I \in \calU^{\sigma}_1$ fulfills $\sigma(I \cup \{1\})=+$ while every $I \in \calU^{\sigma}_n$ fulfills $\sigma(I \cup \{n\}) = -$.

\begin{lemma}
    \label{lemma:upset_downset}
    $\calU^{\sigma}_1$ and $\calU^{\sigma}_n$ are up-sets 
    and $\calD^{\sigma}_1$ and $\calD^{\sigma}_n$ are down-sets 
    of the partial order $\prec$ corresponding to the $r$-signotope~$\sigma$.
\end{lemma}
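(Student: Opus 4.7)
The plan is to verify the four assertions directly from the definition of the partial order $\prec$, which is the transitive closure of the relations read off individual $r$-subsets. Since up-sets and down-sets are preserved under transitive closure, it suffices to prove each statement for cover relations, i.e.\ for pairs $I,J$ with $|I\cap J|=r-2$ arising from a common $r$-subset $X=I\cup J$.

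First I would handle $\calU^{\sigma}_1$. Pick $I\in\calU^{\sigma}_1$ and any $J$ with $J\succ I$ coming from an $r$-subset $X=I\cup J$. Since $1\notin I$ and $1\notin J$, also $1\notin X$, so $Y\mathop{:=}\{1\}\cup X$ is a legitimate $(r+1)$-packet whose smallest element is $1$. Writing $X=(x_1,\dots,x_r)$ and $I=X_i$, $J=X_j$, the $r$-subsets of $Y$ read in reverse lexicographic order are
\[
Y_1=X,\; Y_2=\{1\}\cup X_1,\; Y_3=\{1\}\cup X_2,\;\ldots,\; Y_{r+1}=\{1\}\cup X_r.
\]
So the signotope axiom applied to $Y$ says that the sequence $\sigma(X),\sigma(\{1\}\cup X_1),\dots,\sigma(\{1\}\cup X_r)$ has at most one sign change. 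Now split into two cases depending on $\sigma(X)$: if $\sigma(X)=+$ then $J\succ I$ means $j<i$, and the sequence starts with $+$, so it has the shape $+\cdots+-\cdots-$; because position $i+1$ is $+$ (by $I\in\calU^{\sigma}_1$), every earlier position is $+$, in particular position $j+1$, which gives $\sigma(\{1\}\cup J)=+$, i.e.\ $J\in\calU^{\sigma}_1$. The case $\sigma(X)=-$ is symmetric ($j>i$, and a $+$ at position $i+1$ forces all later positions to be $+$).

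For $\calD^{\sigma}_1$ the argument is literally the same after swapping $+\leftrightarrow -$ and $\succ\leftrightarrow\prec$. For $\calU^{\sigma}_n$ and $\calD^{\sigma}_n$, I would repeat the construction with $Y\mathop{:=}X\cup\{n\}$, whose largest element is $n$; now the $r$-subsets are read as $\{n\}\cup X_1,\dots,\{n\}\cup X_r,X$, so $\sigma(X)$ sits at the \emph{end} of the sign sequence rather than the beginning, and the sign reversal baked into the definition of $\calU^{\sigma}_n$ and $\calD^{\sigma}_n$ compensates for this flip. The four case analyses are entirely analogous.

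The only mild obstacle is bookkeeping: correctly relating the index $i$ with $I=X_i$, the position of $\{1\}\cup X_i$ (or $\{n\}\cup X_i$) in the sign sequence of $Y$, and the direction of $\prec$ dictated by $\sigma(X)$. Once a uniform template is set up for one case, the remaining three follow by symmetry with no further combinatorial input.
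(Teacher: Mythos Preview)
Your approach is the same as the paper's---reduce to pairs $I,J$ with $|I\cap J|=r-2$ and analyse the $(r+1)$-packet $Y=X\cup\{1\}$ (resp.\ $X\cup\{n\}$)---and the core case analysis is correct. However, there is one genuine gap: you write ``Since $1\notin I$ and $1\notin J$, also $1\notin X$'', but $1\notin J$ is not given. An arbitrary $(r-1)$-subset $J$ with $J\succ I$ and $|I\cap J|=r-2$ might a~priori contain $1$; if it did, then $J\in\calH^{\sigma}_1$, which would already contradict the up-set property, and moreover $Y=\{1\}\cup X$ would have only $r$ elements and fail to be a packet. You must therefore first rule out $1\in J$. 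This is easy but not automatic: if $1\in J$ then $I\cup J=I\cup\{1\}$, and since $1$ is the smallest element we have $I=(I\cup J)_1$, so $\sigma(I\cup\{1\})=+$ forces $I\succ J$, contradicting $J\succ I$. The paper makes exactly this argument before forming the packet. The analogous issue (ruling out $n\in J$) arises for $\calU^{\sigma}_n$ and $\calD^{\sigma}_n$. Once this step is inserted, your proof is complete and matches the paper's.
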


\begin{proof}
    In the following we show that $\calU^{\sigma}_1$ is an up-set. Analogous arguments show that $\calU^{\sigma}_n$ is an up-set and 
    that $\calD^{\sigma}_1$ and $\calD^{\sigma}_n$ are down-sets.
    Let $I$ be an element of $\calU^{\sigma}_1$.
    By definition, it is $1 \notin I$ and $\sigma(I \cup \{1\}) = +$.
    Let $J$ be an $(r-1)$-subset with $J \succ I$.

    If the intersection $I \cap J$ contains $r-2$ elements, 
    we cannot have $1 \in J$, 
    as otherwise $J$ was lexicographic smaller than $I$, that is, $J$ appears in the $(r-1)$-packet $I \cup J$ after $I$, and thus $- = \sigma(I \cup J) = \sigma(I \cup \{1\}) = +$, a contradiction.
    Therefore, $1 \notin J$ and we have $(r+1)$ elements in $I \cup J \cup \{1\}$.
    If $I$ is lexicographic smaller than $J$, 
    we have the lexicographical order $I \cup J \succ_{\lex} J \cup \{1\} \succ_{\lex} I \cup \{1\}$ which corresponds to the order in the $r$-packet $I \cup J \cup \{1\}$.
    Since we have $\sigma(I \cup \{1\}) = +$ by assumption and $\sigma(I \cup J) = +$ because $J \succ I$, it follows $\sigma(J \cup \{1\}) = +$. Hence $J \in \calU^{\sigma}_1$.
    
    In the other case, if $J$ is lexicographical smaller than $I$, we have the lexicographical order 
    $I \cup J \succ_{\lex}  I \cup \{1\} \succ_{\lex} J \cup \{1\}$.
    Since we have $\sigma(I \cup \{1\}) = +$ and $\sigma(I \cup J) = -$, 
    it follows $\sigma(J \cup \{1\}) = +$ and hence again $J \in \calU^{\sigma}_1$.
    
    If the intersection $ I \cap J$ contains less than $r-2$ elements, we proceed by induction.
    There is a chain $I = Z_1 \prec Z_2 \prec \cdots \prec Z_k = J$ such that any two consecutive $Z_i$ have an intersection of $r-2$ elements. 
    For $i=2,\ldots,k$, 
    since $Z_{i-1} \in \calU^{\sigma}_1$, 
    we conclude that $Z_i \in \calU^{\sigma}_1$, and in particular, $J \in \calU^{\sigma}_1$.
    This completes the proof that $\calU^{\sigma}_1$ is an up-set.
\end{proof}

We now study the effect of a clockwise rotation to the partial order. 
In the partial order $\prec_{\rot}$ corresponding to the rotated signotope $\sigma_{\rot}$, 
the sets 
$(\calU^{\sigma}_1)_{\rot}
$ 
and 
$(\calD^{\sigma}_1)_{\rot}
$ 
remain up-set and down-set, respectively.
Here $\calX_{\rot} = \{ X_{\rot} \colon X \in \calX \}$ denotes the clockwise rotated sets of a set-system $\calX$. 

\begin{lemma}
    \label{lemma:upset_rotate}
    It holds
    $(\calH^{\sigma}_1)_{\rot} = \calH^{\sigma_{\rot}}_n$,
    $(\calU^{\sigma}_1)_{\rot} = \calU^{\sigma_{\rot}}_n$,
    and
    $(\calD^{\sigma}_1)_{\rot} = \calD^{\sigma_{\rot}}_n$.
\end{lemma}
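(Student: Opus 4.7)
The plan is to prove all three equalities simultaneously by unfolding the definitions and applying the two-case rotation identity
\[
\sigma_{\rot}(X_{\rot}) =
\begin{cases}
\sigma(X) & \text{if } 1 \notin X, \\
-\sigma(X) & \text{if } 1 \in X,
\end{cases}
\]
which was recorded in Section~\ref{sec:prelim}. Since $\calH^{\sigma}_1, \calU^{\sigma}_1, \calD^{\sigma}_1$ partition $\binom{[n]}{r-1}$ and likewise $\calH^{\sigma_{\rot}}_n, \calU^{\sigma_{\rot}}_n, \calD^{\sigma_{\rot}}_n$, and since clockwise rotation acts as a bijection on $\binom{[n]}{r-1}$, it suffices to verify one forward inclusion in each of the three cases.

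First I would observe that $1 \in I$ if and only if $n \in I_{\rot}$, because rotation sends $1$ to $n$ and every other element $x$ to $x-1 < n$. This immediately yields $(\calH^{\sigma}_1)_{\rot} = \calH^{\sigma_{\rot}}_n$.

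Next, for $I \in \calU^{\sigma}_1$ I would set $X = I \cup \{1\}$. The definition gives $1 \notin I$ (hence $n \notin I_{\rot}$ by the previous observation) and $\sigma(I \cup \{1\}) = +$. Since $1 \in X$, the index-shift formula gives $X_{\rot} = I_{\rot} \cup \{n\}$, and the rotation identity yields
\[
\sigma_{\rot}(I_{\rot} \cup \{n\}) = \sigma_{\rot}(X_{\rot}) = -\sigma(X) = -\sigma(I \cup \{1\}) = -,
\]
so $I_{\rot} \in \calU^{\sigma_{\rot}}_n$ by definition. The same computation with $\sigma(I \cup \{1\}) = -$, and the output sign therefore flipped to $+$, establishes the inclusion $(\calD^{\sigma}_1)_{\rot} \subseteq \calD^{\sigma_{\rot}}_n$.

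The only subtle point, and not really an obstacle, is the apparent sign-flip built into the definitions of $\calU_n$ and $\calD_n$ relative to $\calU_1$ and $\calD_1$: the ``up-set'' condition requires sign $+$ on the completed $r$-tuple when the distinguished element is $1$, but sign $-$ when the distinguished element is $n$, and vice versa for the ``down-set''. This asymmetry is exactly what is compensated by the extra minus sign introduced by the rotation formula whenever the rotated element $1$ lies in the tuple, which is precisely the case here because the tuple is $I \cup \{1\}$. Hence no further combinatorial argument is required.
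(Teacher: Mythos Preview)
Your proof is correct and follows essentially the same approach as the paper: both arguments first handle $\calH$ via the observation $1\in I \Leftrightarrow n\in I_{\rot}$, then reduce to one-sided inclusions using the partition structure, and finally verify $(\calU^{\sigma}_1)_{\rot}\subseteq \calU^{\sigma_{\rot}}_n$ and $(\calD^{\sigma}_1)_{\rot}\subseteq \calD^{\sigma_{\rot}}_n$ by the same sign computation $\sigma_{\rot}(I_{\rot}\cup\{n\}) = -\sigma(I\cup\{1\})$. Your additional remark on why the sign conventions match up is a helpful clarification but does not change the argument.
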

    
\begin{proof}
	An $(r-1)$-subset $I$ contains the first element~$1$ if and only if its clockwise rotation $I_{\rot}$ contains the last element~$n$.
	Therefore, we have $(\calH^{\sigma}_1)_{\rot} = \calH^{\sigma_{\rot}}_n$ and $(\calU^{\sigma}_1 \cup \calD^{\sigma}_1)_{\rot} = \calU^{\sigma_{\rot}}_n \ \cup \ \calD^{\sigma_{\rot}}_n$.
	To show $(\calU^{\sigma}_1)_{\rot} = \calU^{\sigma_{\rot}}_n$ and $(\calD^{\sigma}_1)_{\rot} = \calD^{\sigma_{\rot}}_n$,
	it suffices to prove $(\calU^{\sigma}_1)_{\rot} \subseteq \calU^{\sigma_{\rot}}_n$ and $(\calD^{\sigma}_1)_{\rot} \subseteq \calD^{\sigma_{\rot}}_n$.
	
    To show $(\calU^{\sigma}_1)_{\rot} \subseteq \calU^{\sigma_{\rot}}_n$, let $I \in \calU^{\sigma}_1$, i.e., 
    $\sigma(I \cup \{1\}) = +$. 
    After rotating the element~$1$, we obtain 
    \begin{align*}
        \sigma_{\rot}((I \cup \{1\})_{\rot})
    = - \sigma(I \cup \{1\}) = -.
    \end{align*}
    Since $(I \cup \{1\})_{\rot} = I_{\rot} \cup \{n\}$, we have $\sigma_{\rot}(I_{\rot} \cup \{n\}) = -$ and thus $I_{\rot} \in \calU^{\sigma_{\rot}}_n$. 
    An analogous argument shows  $(\calD^{\sigma}_1)_{\rot} \subseteq \calD^{\sigma_{\rot}}_n$.
    This completes the proof of Lemma~\ref{lemma:upset_rotate}. 
\end{proof}

With the above lemmas, 
we can now prove Proposition~\ref{prop:rot_incomp}.

\propositionrotincomp*

\begin{proof}
    Assume towards a contradiction that $I,J$ are two $(r-1)$-subsets 
    with $I \prec J$ 
    and $I_{\rot} \succ_{\rot} J_{\rot}$.
    
    If $I \in \calU^{\sigma}_1$,
    then by Lemma~\ref{lemma:upset_downset},
    $J \in \calU^{\sigma}_1$.
    If $I \in \calD^{\sigma}_1$,
    then by Lemma~\ref{lemma:upset_rotate},
    $I_{\rot} \in \calD^{\sigma_{\rot}}_n$ and by Lemma~\ref{lemma:upset_downset} and the assumption that $I_{\rot} \succ_{\rot} J_{\rot}$ it is 
    $J_{\rot} \in \calD^{\sigma_{\rot}}_n$
    and again, by Lemma~\ref{lemma:upset_rotate}
    $J \in \calD^{\sigma}_1$.
    Analogous arguments show that,
    if $J \in \calD^{\sigma}_1$ 
    (resp.~$J \in \calU^{\sigma}_1$), 
    then $I \in \calD^{\sigma}_1$ 
    (resp.~$I \in \calU^{\sigma}_1$).
    
    Since $1 \notin I \cap J$ not both $I$ and $J$ can be in $\calH_1^\sigma$.
    Hence $I$ and $J$ are both in $\calD^{\sigma}_1$ or both in $\calU^{\sigma}_1$.
    Since $I \prec J$, there is a chain $I=Z_1 \prec \ldots \prec Z_k = J$. By Lemma~\ref{lemma:upset_downset} it is  $Z_1,\ldots,Z_k \in \calD^{\sigma}_1$ (resp.~$\calU^{\sigma}_1$).
    After a clockwise rotation, we have $(Z_1)_{\rot},\ldots,(Z_k)_{\rot} \in \calD^{\sigma_{\rot}}_n$ (resp.~$\calU^{\sigma_{\rot}}_n$) and hence
    $I_{\rot} = (Z_1)_{\rot} \prec_{\rot} \ldots \prec_{\rot} (Z_k)_{\rot} = J_{\rot}$,
    which is a contradiction to $I_{\rot}  \succ_{\rot} J_{\rot}$. This completes the proof. 
\end{proof}
It is worth noting that 
for $I,J \in \calH^{\sigma}_1$ (i.e., $1 \in I \cap J$)
with $I \prec J$
Lemma~\ref{lemma:upset_downset} implies that any chain $I=Z_1 \prec \ldots \prec Z_k = J$
lies entirely in~$\calH^{\sigma}_1$ 
(i.e., $Z_1,\ldots,Z_k \in \calH^{\sigma}_1$).
Since a clockwise rotation converts comparability of elements containing the element~1,
we have $I_{\rot}=(Z_1)_{\rot} \succ_{\rot} \ldots \succ_{\rot} (Z_k)_{\rot} = J_{\rot}$.

} 

\end{document}